\newtheorem{thm}{Theorem}[section]
\newtheorem{lem}[thm]{Lemma}
\newtheorem{cor}[thm]{Corollary}
\newtheorem{prop}[thm]{Proposition}
\newtheorem{prop/def}[thm]{Proposition/Definition}
\theoremstyle{definition}
\newtheorem{defn}[thm]{Definition}
\newtheorem{ex}[thm]{Example}
\theoremstyle{remark}
\newtheorem{rem}[thm]{Remark}
\numberwithin{equation}{section}
\newcommand{\thmref}[1]{Theorem~\ref{#1}}
\newcommand{\corref}[1]{Corollary~\ref{#1}}
\newcommand{\secref}[1]{\S\ref{#1}}
\newcommand{\propref}[1]{Proposition~\ref{#1}}
\newcommand{\lemref}[1]{Lemma~\ref{#1}}
\newcommand{\exref}[1]{Example~\ref{#1}}
\newcommand{\remref}[1]{Remark~\ref{#1}}
\newcommand{\Hom}{\operatorname{Hom}}
\newcommand{\Ind}{\operatorname{Ind}}
\newcommand{\Ext}{\operatorname{Ext}}
\newcommand{\coker}{\operatorname{coker}}
\newcommand{\Id}{\operatorname{Id}}
\newcommand{\A}{{\mathcal  A}}
\newcommand{\B}{{\mathcal  B}}
\newcommand{\C}{{\mathcal  C}}
\newcommand{\K}{{\mathcal  K}}
\newcommand{\U}{{\mathcal  U}}
\newcommand{\Nil}{{\mathcal Nil}}
\newcommand{\F}{{\mathcal  F}}
\newcommand{\bT}{{\bar  T}}
\newcommand{\Z}{{\mathbb  Z}}
\newcommand{\ra}{\rightarrow}
\newcommand{\xra}{\xrightarrow}
\newcommand{\lra}{\longrightarrow}
\newcommand{\hra}{\hookrightarrow}
\begin{document}

\title[On the Krull filtration of $\U$]{The Krull filtration of the category of unstable modules over the Steenrod algebra}

\author[Kuhn]{Nicholas J.~Kuhn}
\address{Department of Mathematics \\ University of Virginia \\ Charlottesville, VA 22904}
\email{njk4x@virginia.edu}
\thanks{This research was partially supported by grants from the National Science Foundation}

\date{June 25, 2013.}

\subjclass[2000]{Primary 55S10; Secondary 18E10}

\begin{abstract}  In the early 1990's, Lionel Schwartz gave a lovely characterization of the Krull filtration of $\U$, the category of unstable modules over the mod $p$ Steenrod algebra.  Soon after, this filtration was used by the author as an organizational tool in posing and studying some topological nonrealization conjectures.  In recent years the Krull filtration of $\U$ has been similarly used by Castellana, Crespo, and Scherer in their study of H--spaces with finiteness conditions.  Recently, Gaudens and Schwartz have given a proof of some of my conjectures.  In light of these topological applications, it seems timely to better expose the algebraic properties of the Krull filtration.
\end{abstract}

\maketitle
\section{Introduction} \label{introduction}

The mod $p$ cohomology of a space $H^*(X)$ is naturally an object in both $\U$ and $\K$, the categories of unstable modules and algebras over the mod $p$ Steenrod algebra $\A$. Over the past quarter century, much important work in unstable homotopy theory has been grounded in the tremendous advances made during the 1980's in our understanding of the abelian category $\U$, and the central role played by $H^*(B\Z/p)$.

In his 1962 paper [Gab62], Gabriel introduced the Krull filtration of a general abelian category.  Applied to $\U$, the Krull filtration is the increasing sequence of localizing subcategories
$$ \U_0 \subset \U_1 \subset \U_2 \subset \U_3 \subset \U_4 \subset \dots$$
recursively defined as follows:  $\U_0$ is the full subcategory of locally finite unstable $\A$-modules, and for $n>0$, $\U_n$ is the full subcategory of unstable modules which project to a locally finite object in the quotient category $\U/\U_{n-1}$.

In the early 1990's, Lionel Schwartz gave a lovely characterization of the Krull filtration of $\U$ in terms of Lannes' $T$-functor.

Soon after, this filtration was used by the author \cite{k1} as an organizational tool in posing and studying some topological nonrealization conjectures.  In particular, the author's Strong Realization Conjecture posited that if $H^*(X) \in \U_n$ for some finite $n$, then $H^*(X) \in \U_0$.  A proof of this was recently given by Schwartz and G. Gaudens \cite{gaudens schwartz}.  (Special cases were proved earlier in \cite{k1,s3,s4}.)

The Krull filtration of $\U$ has been similarly used in recent years by Castellana, Crespo, and Scherer in their study of H--spaces with finiteness conditions \cite{ccs1,ccs2}.

The few results about the Krull filtration are scattered in the literature, and are not particularly comprehensive.  Moreover, Schwartz just sketched a proof of his key characterization in \cite[\S6.2]{s2}, and we suspect many readers would have trouble filling in the details.  In light of the recent topological applications, it seems timely to better expose the properties of this interesting bit of the structure of $\U$.

\subsection{Schwartz's characterization of $\U_n$}

Lannes \cite{L} defines $T: \U \ra \U$ to be the left adjoint to the functor sending $M$ to $H^*(B\Z/p) \otimes M$, and proves that this functor satisfies many remarkable properties; in particular, it is exact and commutes with tensor products.

We need the reduced version. Let $\bT: \U \ra \U$ be left adjoint to the functor sending $M$ to $\tilde H^*(B\Z/p) \otimes M$, so that $TM = \bT M \oplus M$.  Then let $\bT^n$ denote its $n$th iterate.

Schwartz's elegant characterization of $\U_n$ \cite[Thm.6.2.4]{s2} goes as follows.  It generalizes the $n=0$ case, proved earlier by Lannes and Schwartz \cite{ls2}.

\begin{thm} \label{schwartz thm}
 \label{T thm} $\U_n = \{ M \in \U \ | \ \bT^{n+1}M = 0 \}$. \end{thm}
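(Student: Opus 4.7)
The plan is to set $\mathcal{V}_n := \{M \in \U \mid \bT^{n+1}M = 0\}$ and establish $\U_n = \mathcal{V}_n$ by induction on $n$. The first move is to check that each $\mathcal{V}_n$ is a localizing subcategory of $\U$: since $T$ is exact and $TM = \bT M \oplus M$ naturally, $\bT$ is itself exact, and as a left adjoint it preserves arbitrary direct sums. Exactness together with cocontinuity then close $\mathcal{V}_n$ under subobjects, quotients, extensions, and direct sums. The base case $n=0$ is the Lannes--Schwartz theorem \cite{ls2}: $\U_0 = \ker \bT = \mathcal{V}_0$.

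The inductive step reduces formally to the following key lemma, which I would prove in parallel with the theorem:

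\medskip
\noindent\emph{Key Lemma.} For each $n \geq 1$, $\bT(\U_n) \subseteq \U_{n-1}$, and consequently $\bT$ descends to an exact cocontinuous functor $\tilde T$ on $\U/\U_{n-1}$ whose kernel is exactly the locally finite subcategory $(\U/\U_{n-1})_0$.
\medskip

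Granting the Key Lemma, the induction runs as follows. If $M \in \U_n$, then $\bar M \in (\U/\U_{n-1})_0$, so $\tilde T\bar M = 0$, i.e., $\bT M \in \U_{n-1} = \mathcal{V}_{n-1}$ (by induction); therefore $\bT^{n+1}M = \bT^n(\bT M) = 0$, and $M \in \mathcal{V}_n$. Conversely, if $M \in \mathcal{V}_n$, then $\bT M \in \mathcal{V}_{n-1} = \U_{n-1}$, so $\tilde T\bar M = 0$, and the kernel half of the Key Lemma forces $\bar M$ to be locally finite, i.e., $M \in \U_n$. Note that $\bT$ preserves $\U_{n-1}$ automatically from the inductive identification $\U_{n-1}=\mathcal{V}_{n-1}$, since $\mathcal{V}_{n-1}$ is visibly $\bT$-stable, so the descent to $\tilde T$ is well defined.

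The real content of the theorem is thus the Key Lemma. The inclusion $\bT(\U_n) \subseteq \U_{n-1}$ amounts to $\tilde T$ annihilating the locally finite subcategory of $\U/\U_{n-1}$; by exactness and cocontinuity of $\tilde T$, this should reduce to checking on simple objects of $\U/\U_{n-1}$, where I would use concrete representatives together with Lannes--Schwartz applied in $\U$ and the tensor-compatibility $T(M \otimes N) = TM \otimes TN$. The converse --- that $\tilde T\bar M = 0$ \emph{forces} $\bar M$ to be locally finite --- is the main obstacle, being an analog of the Lannes--Schwartz theorem inside the Gabriel quotient. My strategy would be to imitate the original Lannes--Schwartz argument modulo $\U_{n-1}$: use that suitable injective cogenerators of $\U$ (built from $H^*(BV)$ and Brown--Gitler-type summands) project to a cogenerating family in $\U/\U_{n-1}$, so that a putative nonzero $\tilde T\bar M$ is still detected by a nonzero map to one of them. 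Verifying that enough of the detection machinery --- injectivity, adjunction, tensor decomposition --- survives Gabriel localization is where I expect the bulk of the work to lie.
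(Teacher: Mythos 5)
Your reduction of the theorem to the ``Key Lemma'' is correct as far as it goes: setting $\mathcal{V}_n = \{M : \bT^{n+1}M = 0\}$, using that $\bT$ is exact and cocontinuous to see each $\mathcal{V}_n$ is localizing and $\bT$-stable, and observing that with the inductive hypothesis $\U_{n-1} = \mathcal{V}_{n-1}$ the desired equality $\U_n = \mathcal{V}_n$ is equivalent to $\ker\tilde T = (\U/\U_{n-1})_0$ for the induced functor $\tilde T$ on the Gabriel quotient --- all of that bookkeeping is sound. But at that point you have not simplified the problem: the Key Lemma \emph{is} the theorem (given the easy inclusion $(\U/\U_{n-1})_0 \subseteq \ker\tilde T$, which you also leave to a sketch), and what you offer for it is a program, not an argument. ``Imitate Lannes--Schwartz in the quotient category'' requires that enough injective machinery descend to $\U/\U_{n-1}$, and this is exactly where the difficulties concentrate: the exact quotient functor does not preserve injectives in general (only $\U_{n-1}$-closed injectives map to injectives), $H^*(BV)\otimes J(n)$ need not be $\U_{n-1}$-closed, and the adjunction and tensor-decomposition arguments in the original proof would have to be re-established in the localized setting. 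You yourself flag this as ``where I expect the bulk of the work to lie,'' which is an honest admission that the hard half of the proof is missing.

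The paper takes a different, more concrete route that avoids doing analysis inside the Gabriel quotient. It reduces (via \corref{gen cor} and \thmref{finite gen thm}(c)) to modules $\Sigma^s M$ with $M$ finitely generated, $\Nil$-reduced, and simple in $\U/\Nil$, and then leans on the short exact sequence $0 \to \Phi M \to M \to \Sigma\Omega M \to 0$. The technical pivot is \lemref{omega lemma}: for $\Nil$-reduced $M$, $\bT^{n+1}M = 0 \Leftrightarrow \bT^n\Omega M = 0$, proved using \corref{deg 0 cor} (a $\Nil$-reduced module killed by $\bT$ sits in degree $0$). This lets the inductive step proceed by a direct case analysis on $\Phi M$ versus $\Omega M$ rather than by rebuilding injective cogenerators modulo $\U_{n-1}$, and \lemref{phi lemma} handles the simplicity in $\U/\U_{n-1}$. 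If you want to complete your proposal, the cleanest way forward is to replace your Key Lemma strategy with exactly this $\Phi$--$\Omega$ reduction; the localization-of-injectives approach would require substantially more scaffolding than the theorem itself.
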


We will give a proof of this theorem which is simpler than the proof outlined in \cite{s2}.

All of our subsequent results about $\U_n$ use this theorem.

As an example, since $\bT(M \otimes N) = (\bT M \otimes N) \oplus (M \otimes \bT N) \oplus (\bT M \otimes \bT N)$, the theorem has the following first consequence.

\begin{cor}  If $M \in \U_m$ and $N \in \U_n$, then $M \otimes N \in \U_{m+n}$.
\end{cor}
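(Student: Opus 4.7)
The plan is to apply \thmref{schwartz thm} to reduce the corollary to the purely algebraic statement that $\bT^{m+n+1}(M \otimes N) = 0$ whenever $\bT^{m+1}M = 0$ and $\bT^{n+1}N = 0$. The three-term formula for $\bT$ on a tensor product displayed just before the corollary already points the way: iterating it should produce tensor factors of rapidly growing $\bT$-degree, so the hypotheses will eventually kill every summand.

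The key step is to establish, by induction on $k \geq 0$, the claim that $\bT^k(M \otimes N)$ is a direct sum (with multiplicities) of terms of the form $\bT^i M \otimes \bT^j N$ with $i + j \geq k$. The base case $k = 0$ is trivial; for the inductive step, one applies $\bT$ to each summand $\bT^i M \otimes \bT^j N$ and observes that the three resulting terms have index sums $i+j+1$, $i+j+1$, and $i+j+2$, each at least $(k-1)+1 = k$ by the inductive hypothesis, and thus in fact $\geq k+1$ after one further application. That $\bT$ commutes with the direct sum decomposition at each step is automatic, since $\bT$ is a left adjoint.

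With this structural claim in hand, the conclusion is immediate: taking $k = m+n+1$, every summand $\bT^i M \otimes \bT^j N$ of $\bT^{m+n+1}(M \otimes N)$ must satisfy $i \geq m+1$ or $j \geq n+1$ by pigeonhole, which by the hypotheses forces one of the two tensor factors (and hence the entire summand) to vanish. Schwartz's theorem then yields $M \otimes N \in \U_{m+n}$. There is no real obstacle here: the whole content of the proof is the recognition that \thmref{schwartz thm} converts the problem into one amenable to a trivial induction on iterates of $\bT$, and the rest is bookkeeping of index sums.
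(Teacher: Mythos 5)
Your proposal is correct and takes exactly the approach the paper indicates: apply \thmref{schwartz thm} to reduce to showing $\bT^{m+n+1}(M\otimes N)=0$, then iterate the three-term formula $\bT(M\otimes N)=(\bT M\otimes N)\oplus(M\otimes\bT N)\oplus(\bT M\otimes\bT N)$ and note that the index sum increases by at least one with each application, so every summand of $\bT^{m+n+1}(M\otimes N)$ vanishes by pigeonhole. The only blemish is the garbled phrase ``and thus in fact $\geq k+1$ after one further application'' in the inductive step, which adds nothing and should just be cut.
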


\subsection{The quotient category $\U_n/\U_{n-1}$ and consequences.}

Our next theorem usefully identifies the quotient category $\U_n/\U_{n-1}$.

We need to introduce some basic unstable modules.  Let $F(1)$ be the free unstable module on a $1$--dimensional class.  Explicitly, $F(1) = \A\cdot x \subset H^*(B\Z/p)$, where $x$ generates $H^1(B\Z/p)$. Thus when $p=2$,
$$ F(1) =
\langle x, x^2, x^4, \dots \rangle \subset \Z/2[x],$$
and when $p$ is odd,
$$ F(1) = \langle x, y, y^p, y^{p^2}, \dots \rangle \subset \Lambda(x) \otimes \Z/p[y].$$

If $M \in \U_n$, then $\bT^nM \in \U_0$.  But $\bT^nM$ has more structure than just this: note that $\bT^n$ is left adjoint to tensoring with $\tilde H^*(B\Z/p)^{\otimes n}$, and thus $\bT^n(M)$ has a natural action of the $n$th symmetric group $\Sigma_n$. Let $\Sigma_n\text{--}\U_0$ denote the category of $M \in \U_0$ equipped with a $\Sigma_n$--action.

\begin{thm} \label{Un/Un-1 thm} The exact functor $\bT^n: \U_n \ra \Sigma_n\text{--}\U_0$ has as right adjoint the functor $$N \mapsto (N \otimes F(1)^{\otimes n})^{\Sigma_n},$$ and together these functors induce an equivalence
$$\U_n/\U_{n-1} \simeq \Sigma_n\text{--}\U_0.$$
\end{thm}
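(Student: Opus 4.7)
My plan is to first establish the adjunction, then deduce the equivalence from its unit and counit behavior.

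To show $\rho(N) := (N \otimes F(1)^{\otimes n})^{\Sigma_n}$ lies in $\U_n$: from $\Hom_\U(F(1), M) = M^1$ one directly computes $\bT F(1) = F(0)$, so $\bT^2 F(1) = 0$ and $F(1) \in \U_1$ by Theorem~\ref{schwartz thm}. The tensor product Corollary then gives $F(1)^{\otimes n} \in \U_n$, so $N \otimes F(1)^{\otimes n} \in \U_n$ for $N \in \U_0$; since $\U_n$ is closed under subobjects the $\Sigma_n$-invariants remain in $\U_n$.

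For the adjunction, I iterate Lannes' adjunction $\bT \dashv (-) \otimes \tilde H^*(B\Z/p)$ to get $\Hom_\U(\bT^n M, N) \cong \Hom_\U(M, N \otimes \tilde H^*(B\Z/p)^{\otimes n})$, and take $\Sigma_n$-invariants on both sides (moving them inside $\Hom_\U(M, -)$ since $M$ has trivial $\Sigma_n$-action) to obtain
$$\Hom_{\Sigma_n\text{--}\U_0}(\bT^n M, N) \cong \Hom_\U\bigl(M, (N \otimes \tilde H^*(B\Z/p)^{\otimes n})^{\Sigma_n}\bigr).$$
The main technical step is to replace $\tilde H^*(B\Z/p)^{\otimes n}$ by $F(1)^{\otimes n}$ inside this last $\Hom$: the inclusion $F(1)^{\otimes n} \hookrightarrow \tilde H^*(B\Z/p)^{\otimes n}$ should induce an isomorphism of Hom-sets for $M \in \U_n$, $N \in \U_0$. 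One filters the cokernel by the number of tensor factors landing in $Q := \tilde H^*(B\Z/p)/F(1)$ and shows each graded piece, tensored with $N$ and with $\Sigma_n$-invariants taken, admits no nonzero maps from $\U_n$-objects.

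Given the adjunction $\bT^n \dashv \rho$, the equivalence $\U_n/\U_{n-1} \simeq \Sigma_n\text{--}\U_0$ follows by verifying counit and unit. For the counit $\epsilon_N: \bT^n \rho(N) \to N$: the derivation identity $\bT(X \otimes Y) = \bT X \otimes Y \oplus X \otimes \bT Y \oplus \bT X \otimes \bT Y$ combined with $\bT N = 0$, $\bT F(1) = F(0)$, $\bT F(0) = 0$ yields, by iteration, $\bT^n(N \otimes F(1)^{\otimes n}) \cong N \otimes \Z/p[\Sigma_n]$ with the regular $\Sigma_n$-action (the $n!$ non-vanishing contributions arise from distributing each $\bT$ to a distinct $F(1)$ factor). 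Since $\bT^n$ is exact (a summand of Lannes' exact $T^n$), it commutes with the kernel-defined $\Sigma_n$-invariants, so $\bT^n \rho(N) \cong (N \otimes \Z/p[\Sigma_n])^{\Sigma_n} \cong N$. For the unit $\eta_M: M \to \rho \bT^n M$, the triangle identity $\epsilon_{\bT^n M} \circ \bT^n(\eta_M) = \mathrm{id}$ together with the counit iso forces $\bT^n(\eta_M)$ to be iso; exactness then gives $\bT^n(\ker \eta_M) = 0 = \bT^n(\coker \eta_M)$, placing both in $\U_{n-1}$ by Theorem~\ref{schwartz thm}. Standard Gabriel localization theory upgrades the adjunction to the claimed equivalence.

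The main obstacle is the comparison lemma in the adjunction step: the vanishing of Hom-sets from $\U_n$-objects into the graded pieces of the filtered cokernel of $F(1)^{\otimes n} \hookrightarrow \tilde H^*(B\Z/p)^{\otimes n}$ (after tensoring with $N \in \U_0$ and taking $\Sigma_n$-invariants) is where the essential algebra lives, encoding how the Krull filtration interacts with the specific structure of $\tilde H^*(B\Z/p)$.
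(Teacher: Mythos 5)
Your overall architecture is the same as the paper's: establish the adjunction $\bT^n \dashv \rho$ where $\rho(N) = (N \otimes F(1)^{\otimes n})^{\Sigma_n}$; compute that the counit is an isomorphism using $\bT F(1) \simeq \Z/p$ and hence $\bT^n(F(1)^{\otimes n}) \simeq \Z/p[\Sigma_n]$; deduce that the unit has kernel and cokernel in $\U_{n-1}$ by exactness of $\bT^n$ and the triangle identity; and conclude by Gabriel. Your treatment of the counit and unit is essentially verbatim what the paper does.

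The one place where you diverge from the paper, and where you leave a real gap, is the step replacing $\tilde H^*(B\Z/p)^{\otimes n}$ by $F(1)^{\otimes n}$ inside $\Hom_\U(M, -)$. You propose filtering the cokernel of $F(1)^{\otimes n} \hookrightarrow \bar H^{\otimes n}$ (writing $\bar H = \tilde H^*(B\Z/p)$, $Q = \bar H/F(1)$) by the number of $Q$-factors and claiming each graded piece $N \otimes Q^{\otimes k} \otimes F(1)^{\otimes n-k}$, $k \geq 1$, receives no nonzero maps from $\U_n$. You flag this as ``the main obstacle'' and do not prove it. It is in fact true, but proving it is not cheaper than the paper's route: one needs $k_0 Q = k_1 Q = 0$ (equivalently, that $F(1)$ is exactly $k_1\bar H$, which is \propref{pnH prop}), plus the multiplicativity $k_n(A\otimes B) = \sum_{l+m=n} k_l A \otimes k_m B$ (\thmref{kn thm}(d)), to see that $k_n(Q^{\otimes k}\otimes F(1)^{\otimes n-k}) = 0$ when $k\geq 1$. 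Once one has those ingredients in hand, the paper's argument is shorter and avoids the filtration entirely: for $M \in \U_n$ and $N \in \U_0$,
$$\Hom_\U^{\Sigma_n}(M, N\otimes \bar H^{\otimes n}) = \Hom_\U^{\Sigma_n}(M, k_n(N\otimes \bar H^{\otimes n})) = \Hom_\U^{\Sigma_n}(M, N \otimes F(1)^{\otimes n}),$$
where the last equality is exactly $k_n(N\otimes \bar H^{\otimes n}) = N \otimes (k_1\bar H)^{\otimes n} = N \otimes F(1)^{\otimes n}$, using \thmref{kn thm}(d), the fact that $k_n(N\otimes -) = N \otimes k_n(-)$ for $N \in \U_0$, $k_0\bar H = 0$, and $k_1\bar H = F(1)$. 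So the missing piece in your argument is precisely the Krull-filtration calculus from the section preceding this theorem; your filtration-of-cokernel strategy would work, but it reuses the same inputs in a longer form rather than providing a genuinely different or more elementary route.

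One small point worth noting: you should also verify that $\Hom_\U(M,-)$ converts the $\Sigma_n$-invariants (a kernel) correctly, i.e., $\Hom_\U(M,X)^{\Sigma_n} \cong \Hom_\U(M, X^{\Sigma_n})$ for $M$ with trivial $\Sigma_n$-action; this is fine since invariants are a finite limit and $\Hom_\U(M,-)$ is left exact, but it deserves a sentence. Likewise $\bT^n$ commuting with $(-)^{\Sigma_n}$ follows because $\bT^n$ is exact and additive and the invariants are a kernel of a map between finite sums, as you say.
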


Using this theorem, one quite easily obtains a recursive description of modules in $\U_n$.

\begin{thm} \label{recursive Un thm} $M \in \U_n$ if and only if there exist $K,Q \in \U_{n-1}$, $N \in \Sigma_n\text{--}\U_0$, and an exact sequence
$$ 0 \ra K \ra M \ra (N \otimes F(1)^{\otimes n})^{\Sigma_n} \ra Q \ra 0.$$
Furthermore, $\bar T^n M \simeq N$.
\end{thm}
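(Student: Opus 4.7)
The plan is to deduce \thmref{recursive Un thm} from \thmref{Un/Un-1 thm} as a black box. Write $R(N) = (N \otimes F(1)^{\otimes n})^{\Sigma_n}$ for the right adjoint to $\bT^n: \U_n \ra \Sigma_n\text{--}\U_0$, and let $\eta_M: M \ra R\bT^n M$ denote the unit of the adjunction. Since $\bT^n$ and $R$ induce an equivalence $\U_n/\U_{n-1} \simeq \Sigma_n\text{--}\U_0$, standard category theory says the counit $\bT^n R(N) \ra N$ is already an isomorphism in $\Sigma_n\text{--}\U_0$, while $\eta_M$ becomes an isomorphism after projection to $\U_n/\U_{n-1}$.

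For the ``only if'' direction, given $M \in \U_n$, set $N = \bT^n M \in \Sigma_n\text{--}\U_0$ and form the four-term exact sequence
$$0 \ra K \ra M \xra{\eta_M} R(N) \ra Q \ra 0$$
with $K = \ker \eta_M$ and $Q = \coker \eta_M$. Since $M, R(N) \in \U_n$ and $\U_n$ is a Serre subcategory, both $K$ and $Q$ lie in $\U_n$. The projection of $\eta_M$ to $\U_n/\U_{n-1}$ being an isomorphism forces its kernel and cokernel to be zero there, i.e., $K, Q \in \U_{n-1}$.

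For the ``if'' direction, suppose given such an exact sequence with $K, Q \in \U_{n-1}$ and $N \in \Sigma_n\text{--}\U_0$. Then $R(N) \in \U_n$ by construction, and $K \in \U_{n-1} \subset \U_n$, so the Serre closure properties of $\U_n$ (under subobjects and extensions) yield $M \in \U_n$. To identify $\bT^n M$, apply the exact functor $\bT^n$ to the exact sequence; by \thmref{schwartz thm} the outer terms $\bT^n K$ and $\bT^n Q$ vanish, leaving $\bT^n M \cong \bT^n R(N)$, which the counit identifies with $N$.

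The main step worth flagging is really just the initial invocation of \thmref{Un/Un-1 thm}: one must recognize that an adjoint pair inducing an equivalence of localized categories necessarily has unit and counit that become isomorphisms in the appropriate categories (here, the counit in $\Sigma_n\text{--}\U_0$ directly, and the unit modulo $\U_{n-1}$). Everything else is routine bookkeeping with exactness of $\bT^n$ and the Serre property of $\U_n$.
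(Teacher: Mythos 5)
Your proof is correct and follows essentially the same approach as the paper: both directions rest on \thmref{Un/Un-1 thm}, with the ``only if'' part obtained by noting that the unit $\eta_M: M \to (\bT^n M \otimes F(1)^{\otimes n})^{\Sigma_n}$ has kernel and cokernel in $\U_{n-1}$. The only (cosmetic) divergence is in the ``if'' direction: the paper first applies $\bT^n$ to the exact sequence to get $\bT^n M \simeq N$, then applies $\bT$ once more to conclude $\bT^{n+1}M = 0$ and hence $M \in \U_n$ via \thmref{schwartz thm}, whereas you deduce $M \in \U_n$ directly from the Serre closure properties of $\U_n$ before applying $\bT^n$; the two routes use the same ingredients.
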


The case $n=1$, already useful, was previously known \cite[Prop.2.3]{s4}.

One obtains a simple generating set for $\U_n$.

\begin{thm} \label{F(1) thm}  $\U_n$ is the smallest localizing subcategory containing all suspensions of the modules $F(1)^{\otimes m}$ for $0 \leq m \leq n$.
\end{thm}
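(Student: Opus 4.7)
The plan is to induct on $n$. Let $\C_n$ denote the smallest localizing subcategory of $\U$ containing every suspension $\Sigma^k F(1)^{\otimes m}$ with $0 \leq m \leq n$ and $k \geq 0$; I want to show $\C_n = \U_n$.

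For the base case $n=0$, I would argue that the simple objects of $\U$ lying in $\U_0$ are exactly the $\Sigma^k \Z/p = \Sigma^k F(1)^{\otimes 0}$: if $M$ is a simple unstable module and $x \in M$ has minimal degree, then instability plus simplicity force $\A\cdot x = \Z/p\cdot x$ concentrated in a single degree. Any locally finite module is a filtered colimit of its finite submodules, each of which admits a composition series with such simples as factors, so $\U_0 \subseteq \C_0$; the reverse inclusion is immediate.

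For the inductive step, the easy inclusion $\C_n \subseteq \U_n$ uses Theorem~\ref{T thm} together with the tensor-product corollary: a one-line adjunction argument (since $F(1)$ represents $M \mapsto M^1$ and $\tilde H^1(B\Z/p) = \Z/p$) gives $\bar T F(1) = \Z/p$, whence $\bar T^2 F(1) = 0$ and $F(1) \in \U_1$. Iterating the corollary puts $F(1)^{\otimes m} \in \U_m \subseteq \U_n$ for $0 \leq m \leq n$, and $\U_n$ is closed under suspension.

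For the harder inclusion $\U_n \subseteq \C_n$, I would take $M \in \U_n$ and feed it into Theorem~\ref{recursive Un thm} to obtain an exact sequence
$$ 0 \to K \to M \to (N \otimes F(1)^{\otimes n})^{\Sigma_n} \to Q \to 0 $$
with $K, Q \in \U_{n-1}$ and $N \in \U_0$. The inductive hypothesis gives $K, Q \in \C_{n-1} \subseteq \C_n$. Since $(N \otimes F(1)^{\otimes n})^{\Sigma_n}$ is a subobject of $N \otimes F(1)^{\otimes n}$, closure of $\C_n$ under subobjects reduces the task to placing the ambient tensor product in $\C_n$. Writing $N$ as a filtered colimit of finite submodules with composition factors the simples $\Sigma^k \Z/p$, and using that $-\otimes F(1)^{\otimes n}$ is exact (over the field $\Z/p$) and colimit-preserving, identifies $N \otimes F(1)^{\otimes n}$ as built by filtered colimits and extensions from $\Sigma^k \Z/p \otimes F(1)^{\otimes n} = \Sigma^k F(1)^{\otimes n} \in \C_n$. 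Splitting the four-term sequence into two short exact sequences and twice invoking closure of $\C_n$ under extensions then places $M$ in $\C_n$. The one step that requires genuine care is the reduction from the $\Sigma_n$-invariants back up to the ambient tensor product; this works cleanly precisely because invariants form a subobject, so no control over higher $\Sigma_n$-cohomology is ever needed. Everything else is formal bookkeeping with the closure properties of a localizing subcategory and composition series in $\U_0$.
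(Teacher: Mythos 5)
Your proof is correct and follows exactly the route the paper intends: the result is presented as an easy consequence of \thmref{recursive Un thm}, and you supply precisely that deduction, handling the base case via the classification of simple objects in $\U$ as the $\Sigma^s\Z/p$, the easy containment via $\bar T F(1)=\Z/p$ together with \corref{} on tensor products, and the hard containment via the four-term exact sequence, closure under subobjects to pass from $\Sigma_n$-invariants to the full tensor product, and a composition-series argument. The one small imprecision is the phrase ``instability plus simplicity force $\A\cdot x = \Z/p\cdot x$''; the cleaner statement (as in the paper's lemma) is that $M^{>n}$ is a proper submodule of the simple $M$, hence zero, so $M=M^n$ is $1$-dimensional with trivial action, but this is cosmetic and the conclusion is the same.
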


\begin{rem} Let $F(n)$ be the free unstable module on an $n$--dimensional class.  Though $F(n) \in \U_n$, $\U_n$ is generally strictly larger than the localizing subcategory generated by all suspensions of the modules $F(m)$ for $0\leq m \leq n$.  See \exref{F(n) example}.
\end{rem}

\subsection{Interaction with the nilpotent filtration}

To put \thmref{Un/Un-1 thm} in perspective, we remind readers of how the Krull filtration of $\U$ interacts with the nilpotent filtration \cite[\S2]{k1}.

The nilpotent filtration of $\U$, introduced in \cite{s1}, is the decreasing filtration
$$ \U = \Nil_0 \supset \Nil_1 \supset \Nil_2 \supset \Nil_3 \supset \dots$$
defined by letting $\Nil_s$ be the smallest localizing subcategory containing all $s$--fold suspensions of unstable modules.  We write $\Nil$ for $\Nil_1$.

H.-W. Henn, Lannes, and Schwartz \cite{hls1} identify $\U/\Nil$ as follows.  Let $\F$ be the category of functors from finite dimensional $\Z/p$--vector spaces to $\Z/p$--vector spaces.  There is a difference operator $\Delta: \F \ra \F$ defined by
$$ \Delta F(V) = F(V \oplus \Z/p)/F(V).$$
$F$ is said to be polynomial of degree $n$ if $\Delta^{n+1}F = 0$, and analytic if it is locally polynomial.  Let $\F^n$ and $\F^{an}$ denote the subcategories of such functors.

The authors of \cite{hls1} show that there is an equivalence
$ \U/\Nil \simeq \F^{an}$.  Under this equivalence, it is not hard to show that $\bar T$ corresponds to $\Delta$, thus $\U_n$ projects to $\F^n$.
Schwartz \cite{s1} further observes that, for all $s$, $M \mapsto \Sigma^s M$ induces an equivalence $ \U/\Nil \simeq \Nil_s/\Nil_{s+1}$.

Restricting these equivalences to $\U_n$, one learns that, for all $s$, there are equivalences of abelian categories
$$ (\Nil_s \cap \U_n)/(\Nil_{s+1} \cap \U_n)  \simeq \F^n.$$

It is classic (see \cite{pirashvili}, \cite[\S 5.5]{s1}, or \cite{filt gen rep}) that there is an equivalence
\begin{equation} \label{Fn/Fn-1thm} \F^n / \F^{n-1} \simeq \Z/p[\Sigma_n]\text{--modules}.
\end{equation}

The equivalence $\U_n/\U_{n-1} \simeq \Sigma_n\text{--}\U_0$ of \thmref{Un/Un-1 thm} can thus be seen as the correct lift of this result to $\U$.

\begin{rem}  The strongest form of (\ref{Fn/Fn-1thm}) says that the quotient functor $\F^n \ra \Z/p[\Sigma_n]\text{--modules}$ admits both a left and right adjoint, forming a recollement setting \cite[Example 1.5]{filt gen rep}.  By contrast, as $\bar T^n$ does not commute with products, the exact quotient functor
$\U_n \ra \Sigma_n\text{--} \U_0$ does not admit a left adjoint. See \remref{no adjoint remark} for related comments.
\end{rem}

\subsection{The Krull filtration of a module}

Let $k_n: \U \ra \U_n$ be right adjoint to the inclusion $\U_n \hra \U$.  Explicitly, $k_nM \subset M$ is the largest submodule of $M$ contained in $\U_n$.  (So $k_0M$ is the locally finite part of $M$.)  We obtain a natural increasing filtration of $M$:
$$ k_0M \subset k_1M \subset k_2M \subset k_3M \subset \dots .$$
It is useful to let $\bar k_nM$ denote the composition factor $k_nM/k_{n-1}M$.

A basic calculation from  \cite{hls1} can be interpreted as saying the following.

\begin{prop} \label{pnH prop}  \cite[Lem.7.6.6]{hls1} $k_n \tilde H^*(B\Z/p)$ is the span of products of elements in $F(1)$ of length at most $n$.
\end{prop}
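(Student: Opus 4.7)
The plan is to prove the two inclusions separately. Let $S_n \subseteq \tilde{H}^*(B\Z/p)$ denote the $\A$-submodule spanned by products of at most $n$ elements of $F(1)$. By the Cartan formula, each multiplication map $\mu_m \colon F(1)^{\otimes m} \to \tilde{H}^*(B\Z/p)$ is $\A$-linear, so $S_n = \sum_{m=1}^n \im(\mu_m)$ is indeed an $\A$-submodule.

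For $S_n \subseteq k_n \tilde{H}^*(B\Z/p)$: a direct adjunction computation gives $\bar{T} F(1) \cong \Z/p$, namely
$$ \Hom_{\U}(\bar{T} F(1), M) = \Hom_{\U}(F(1), \tilde{H}^*(B\Z/p) \otimes M) = (\tilde{H}^*(B\Z/p) \otimes M)^1 = M^0, $$
whence $\bar{T}^2 F(1) = \bar{T}(\Z/p) = 0$ and $F(1) \in \U_1$ by \thmref{T thm}. Iterating the tensor corollary yields $F(1)^{\otimes m} \in \U_m \subseteq \U_n$ for $m \le n$; since $\U_n$ is closed under quotients and finite direct sums, $S_n \in \U_n$.

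For the reverse inclusion, I would induct on $n$. The base case $n=0$, that $\tilde{H}^*(B\Z/p)$ has no nonzero locally finite submodule, is a classical theorem of Lannes--Schwartz (easily deduced from Lannes' computation of $T \tilde{H}^*(B\Z/p)$ via $\mathrm{Map}(B\Z/p, B\Z/p)$). Assuming $k_{n-1}\tilde{H}^*(B\Z/p) = S_{n-1}$, the usual preimage argument (using closure of $\U_{n-1}$ under extensions) gives $k_{n-1}(\tilde{H}^*(B\Z/p)/S_{n-1}) = 0$. Setting $L = k_n \tilde{H}^*(B\Z/p)/S_{n-1} \in \U_n$, \thmref{Un/Un-1 thm} embeds $L$ into $(\bar{T}^n L \otimes F(1)^{\otimes n})^{\Sigma_n}$ modulo $\U_{n-1}$-error.

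The main obstacle will be to verify that this abstract embedding, viewed inside $\tilde{H}^*(B\Z/p)/S_{n-1}$ via the given inclusion $L \hookrightarrow \tilde{H}^*(B\Z/p)/S_{n-1}$, lands in $S_n/S_{n-1}$. I would handle this by describing the unit of adjunction of \thmref{Un/Un-1 thm} explicitly: iterating the unit of Lannes' adjunction $M \to \tilde{H}^*(B\Z/p) \otimes \bar{T} M$, specialized to $M \subseteq \tilde{H}^*(B\Z/p)$, shows that its values lie inside $n$-fold products of elements of $F(1)$ in $\tilde{H}^*(B\Z/p)$, i.e., inside $S_n$. This would force $L \subseteq S_n/S_{n-1}$ and close the induction.
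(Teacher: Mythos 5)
Your proposal takes a genuinely different route from the paper, and while the easy inclusion is fine, the hard inclusion contains a real gap.

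For $S_n \subseteq k_n\bar H$ (writing $\bar H = \tilde H^*(B\Z/p)$), your argument is correct and clean: $\bar T F(1)\simeq \Z/p$ gives $F(1)\in\U_1$, the tensor corollary gives $F(1)^{\otimes m}\in\U_m$, and $S_n$ is a quotient of $\bigoplus_{m\le n}F(1)^{\otimes m}$, hence lies in $\U_n$.

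The trouble is the reverse inclusion. You set $L=k_n\bar H/S_{n-1}$, correctly note $k_{n-1}L=0$ so the unit $L\to(\bar T^nL\otimes F(1)^{\otimes n})^{\Sigma_n}$ of \thmref{Un/Un-1 thm} is injective, and then want to conclude that the \emph{other} embedding $L\hookrightarrow \bar H/S_{n-1}$ has image in $S_n/S_{n-1}$. But these are two unrelated monomorphisms out of $L$ into two different targets; an abstract embedding of $L$ into $(\bar T^n L\otimes F(1)^{\otimes n})^{\Sigma_n}$ gives no information about where $L$ sits inside $\bar H/S_{n-1}$ unless you can produce a commuting square relating the two. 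Your proposed fix---that ``iterating the unit of Lannes' adjunction \dots shows that its values lie inside $n$-fold products of elements of $F(1)$''---does not type-check: the unit $\eta_M\colon M\to \bar H\otimes\bar T M$ takes values in $\bar H\otimes\bar T M$, not in $\bar H$, so ``its values lie in $S_n\subseteq\bar H$'' is not a meaningful condition. What one actually needs here is an identification of the \emph{kernel} of the iterated unit $\bar H\to\bar T^{n+1}\bar H\otimes\bar H^{\otimes(n+1)}$ with the $n$th stage of the primitive filtration of the Hopf algebra $\bar H$, and this is precisely the nontrivial content the paper supplies. The paper's route is more direct: it uses $\bar H=r(\bar I_{\Z/p})$ and the computation $k_n r(F)=r(p_nF)$ (property~(f) in \secref{krull mod section}) to reduce to identifying $p_n\bar I$ in $\F$, where the Hopf structure of $I_{\Z/p}$ makes the answer visibly the kernel of the $n$-fold reduced coproduct; applying $r$ then gives the primitive filtration of $\bar H$, which is $S_n$. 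Your inductive $\U_n/\U_{n-1}$ strategy could in principle be completed, but only by proving essentially this same coproduct identification, at which point the induction is superfluous.
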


The next theorem lists some basic properties.  In this theorem, $\Phi$ is the Frobenius functor, $nil_sM$ is the maximal submodule of $M$ in $\Nil_s$, $R_sM$ is the reduced module defined by $nil_sM/nil_{s+1}M = \Sigma^s R_sM$, and $\bar R_sM$ is its $\Nil$--closure.  (These will be recalled in more detail in \secref{U section}.)

\begin{thm} \label{kn thm}  The Krull filtration of a module satisfies the following properties, for all $M,N \in \U$.

\noindent{\bf (a)} \ $k_n$ is left exact, commutes with filtered colimits, and $\displaystyle \bigcup_{n=0}^{\infty} k_nM = M$. \\

\noindent{\bf (b)} \ $k_n$ commutes with the functors $\Sigma^s$, $\Phi$, $nil_s$, and $\bar R_s$. \\

\noindent{\bf (c)} \ $k_n$ preserves $\Nil$--reduced modules, $\Nil$--closed modules, and \\ $\Nil$--isomorphisms.   \\

\noindent{\bf (d)} \ $\displaystyle \bigoplus_{l+m=n} \bar k_lM \otimes \bar k_mN = \bar k_n(M \otimes N)$.
\end{thm}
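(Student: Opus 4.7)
My plan rests throughout on \thmref{schwartz thm}, which reduces membership $M \in \U_n$ to vanishing of $\bar T^{n+1} M$, together with the K\"unneth identity
\[
\bar T(A \otimes B) = (\bar T A \otimes B) \oplus (A \otimes \bar T B) \oplus (\bar T A \otimes \bar T B)
\]
coming from $T(A \otimes B) \cong TA \otimes TB$ and the decomposition $T = \bar T \oplus \Id$.

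For (a), left exactness of $k_n$ is automatic since $k_n$ is right adjoint to the fully faithful inclusion $\U_n \hookrightarrow \U$. Compatibility with filtered colimits uses that $\U_n$ is localizing, hence stable under filtered colimits in $\U$. For the exhaustion $\bigcup_n k_n M = M$, I reduce to cyclic submodules: a cyclic $\A\cdot x \subset M$ with $|x| = d$ is a quotient of $F(d)$, and $F(d) \in \U_d$ follows by induction on $d$ from $\bar T F(d)$ being a finite sum of copies of $F(k)$ with $k<d$.

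For (b) and (c), the strategy for each $F \in \{\Sigma^s, \Phi, nil_s, \bar R_s\}$ is to verify that $F$ both preserves and reflects $\U_n$-membership, so that $k_n F M$ and $F k_n M$ are two descriptions of the largest $\U_n$-subobject of $FM$. The key commutation identities $\bar T \Phi \cong \Phi \bar T$ and $\bar T \Sigma \cong \Sigma \bar T$, together with the faithfulness of $\Phi$ and $\Sigma$, dispatch those two cases. For $nil_s$ and $\bar R_s$, which are themselves right adjoints, the intersection $\U_n \cap \Nil_s$ (and its $\Nil$-closure) is localizing and the commutation becomes a double universal-property check. Part (c) then follows formally: if $M$ is $\Nil$-reduced, then $nil_1 k_n M = k_n nil_1 M = 0$, and analogous arguments handle $\Nil$-closed modules and $\Nil$-isomorphisms.

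Part (d) will be the main obstacle. The tensor corollary gives
\[
\sum_{l+m \le n} k_l M \otimes k_m N \;\subset\; k_n(M \otimes N),
\]
and hence a natural comparison map $\bigoplus_{l+m=n} \bar k_l M \otimes \bar k_m N \to \bar k_n(M \otimes N)$ in $\U_n/\U_{n-1}$. To verify it is an isomorphism I would apply the equivalence $\bar T^n\colon \U_n/\U_{n-1} \xrightarrow{\sim} \Sigma_n\text{-}\U_0$ of \thmref{Un/Un-1 thm}. Iterating the K\"unneth identity produces a multinomial expansion of $\bar T^n(M \otimes N)$ as a direct sum of tensor products $\bar T^l M \otimes \bar T^m N$ indexed by pairs with $l, m \le n$ and $l + m \ge n$. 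For $X \in \U_l$ and $Y \in \U_m$ with $l + m = n$, all excess terms vanish because $\bar T^{l+1} X = 0 = \bar T^{m+1} Y$, leaving
\[
\bar T^n(X \otimes Y) \cong \Ind_{\Sigma_l \times \Sigma_m}^{\Sigma_n}\bigl(\bar T^l X \otimes \bar T^m Y\bigr),
\]
which pins down the $\bar T^n$-image of the right-hand side. Completing the proof requires showing that the excess K\"unneth terms with $l + m > n$ appearing in $\bar T^n(M \otimes N)$ all lie in $\bar T^n k_{n-1}(M \otimes N)$ and hence vanish on passage to $\bar k_n$, while the $l + m = n$ contributions faithfully reproduce the right-hand side once the $\Sigma_n$-equivariance induced by the cosets $\Sigma_n/(\Sigma_l \times \Sigma_m)$ is tracked. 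Making this $\Sigma_n$-bookkeeping precise, together with controlling the excess terms, is where the real work lies.
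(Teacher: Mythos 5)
Your plan works for part (a) and for the $\Sigma^s$, $\Phi$, and $nil_s$ cases of (b), where the ``preserves and reflects $\U_n$-membership'' paradigm combined with the fact that subobjects of $\Sigma^s M$, $\Phi M$, $nil_s M$ are of the form $\Sigma^s N$, $\Phi N$, $nil_s N$ does give the commutations. However there are three genuine gaps. First, you misclassify $\bar R_s$ as a right adjoint: it is not, it is merely left exact (a nontrivial fact cited to \cite{k5}), and more importantly $\bar R_s$ is not faithful and does not reflect subobjects --- $\bar R_s M = 0$ whenever $nil_sM = nil_{s+1}M$ --- so the ``preserve and reflect'' argument simply does not apply. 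Second, the claim that ``analogous arguments handle $\Nil$-closed modules and $\Nil$-isomorphisms'' in (c) is not substantiated: $\Nil$-closedness also involves $\Ext^1$-vanishing and is not captured by $nil_1$, and $\Nil$-isomorphism preservation is a statement about cokernels, where $k_n$ behaves badly. Third, (d) is explicitly left open; you acknowledge the $\Sigma_n$-bookkeeping and the control of excess K\"unneth terms are ``where the real work lies,'' but that is the content of the statement.

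All of these are handled in the paper by one device you never invoke: the explicit kernel description
$k_nM = \ker\{\eta_M : M \to \bar T^{n+1}M \otimes \bar H^{\otimes n+1}\}$, where $\bar H = \tilde H^*(B\Z/p)$. For $\bar R_s$, one applies the left exact $\bar R_s$ to this sequence and uses that $\bar H$ is $\Nil$-closed and $\bar R_s$ commutes with $\bar T$. For (c), one applies the functors $r$ and $l$ of the equivalence $\U/\Nil \simeq \F^{an}$ to the corresponding exact sequences for $k_n$ and $p_n$, giving $k_n r = r p_n$ and $l k_n = p_n l$; these two identities then formally yield preservation of $\Nil$-closed modules and of $\Nil$-isomorphisms. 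For (d), the paper first proves the unshifted filtration statement $\sum_{l+m=n}k_lM \otimes k_mN = k_n(M\otimes N)$, reducing via these identities to the functor-category analogue \thmref{poly filt thm}(b) when $M$ and $N$ are $\Nil$-closed, then extending to sums of $\Nil$-closed tensor locally finite modules (hence to all $\U$-injectives), and finally to arbitrary $M,N$ by embedding into injective coresolutions and using left exactness; the displayed (d) is then the associated graded. If you want to push your $\bar T^n$-K\"unneth route for (d), you will still need a device like the paper's reduction to injectives to control the filtration before applying the exact $\bar T^n$ to it, so the $\F^{an}$ machinery is hard to avoid.
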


In contrast to (b), the natural map
$$ R_s(k_nM) \ra k_n(R_sM)$$
need only be a monomorphism (with cokernel in $\Nil$).  See \exref{Rs ex}.

\subsection{Symmetric sequences of locally finite modules}

We construct a functor from $\U$ to the category of symmetric sequences of locally finite modules that seems to nicely encode much of the information about the Krull filtration of a module.

A {\em symmetric sequence} in $\U_0$ is a sequence $M = \{M_0,M_1,M_2, \dots\}$, with $M_n \in \Sigma_n\text{--}\U_0$.  The category of these, $\Sigma_*\text{--}\U_0$, has a symmetric monoidal structure with product
$$ (M \boxtimes N)_n = \bigoplus_{l+m=n} \Ind_{\Sigma_l \times \Sigma_m}^{\Sigma_n}(M_l \otimes N_m).$$

\begin{defn} Let $\sigma_*: \U \lra \Sigma_*\text{--}\U_0$ be defined by
$$ \sigma_nM = \bT^n k_nM.$$
\end{defn}

As $\bT$ is exact and $\bT^n k_{n-1}M = 0$,  $\sigma_n M$ also equals $\bT^n \bar k_nM$.  Thus, under the correspondence of \thmref{Un/Un-1 thm}, $\sigma_nM$ corresponds to the image of the $\U_{n-1}$--reduced composition factor $\bar k_nM$ in $\U_n/\U_{n-1}$.

\begin{thm} \label{sigma thm} $\sigma_*$ satisfies the following properties. \\

\noindent{\bf (a)} \ $\sigma_n$ is left exact and commutes with filtered colimits.\\

\noindent{\bf (b)} \ $\sigma_n$ commutes with the functors $\Sigma^s$, $\Phi$, $nil_s$, and $\bar R_s$. \\

\noindent{\bf (c)} \ $\sigma_n$ preserves $\Nil$--reduced modules, $\Nil$--closed modules, and \\ $\Nil$--isomorphisms.   \\

\noindent{\bf (d)} \ $\sigma_*$ is symmetric monoidal: $ \sigma_*(M \otimes N) = \sigma_*M \boxtimes \sigma_*N$. \\

\noindent{\bf (e)} \ For all $s$ and $n$, there is a natural isomorphism of $\Z/p[\Sigma_n]$--modules
$$(\sigma_nM)^s \simeq \Hom_{\U}(F(1)^{\otimes n}, \bar R_sM).$$

\end{thm}

\subsection{Organization of the paper}  The next three sections respectively contain needed background material on abelian categories, unstable modules, and polynomial functors.

\thmref{schwartz thm}, Schwartz's characterization of $\U_n$, is then proved in \secref{schwartz thm section}.

Properties of the Krull filtration of a module are proved in \secref{krull mod section}, and some of these are used in our proof of the identification of $\U_n/\U_{n-1}$ given in \secref{quotient cat section}.  In \secref{sym sequences section}, we discuss \thmref{sigma thm} and give examples illustrating it.

\subsection{Acknowledgements}

This research was partially funded by N.S.F. grant 0967649.  It was a pleasure to present these results at the Conference on Algebra and Topology celebrating the 60th birthday of Lionel Schwartz held in Nantes, in May, 2013.

\section{Background: abelian categories} \label{abelian categories}

We recall some standard concepts regarding abelian categories, as in \cite{gabriel}.

\subsection{Basic notions}

All the categories in this paper satisfy Grothendieck's axioms AB1--AB5 \cite{weibel} and have a set of generators.  Standard consequences include that such categories have enough injectives.

An object in an abelian category  $\C$ is {\em Noetherian} if its poset of subobjects satisfy the ascending chain condition.  $\C$ itself is said to be {\em locally Noetherian} if every object is the union of its Noetherian subobjects.  Standard consequences include that direct sums of injectives are again injective, and objects admit injective envelopes.

A full subcategory $\B$ of an abelian category $\C$ is {\em localizing} if it is closed under sub and quotient objects, extensions, and direct sums.

$f: M \ra N$ is a {\em $\B$-isomorphism} if $\ker f, \coker f \in \B$.

The quotient category $\C/\B$ has the same objects as $\C$, with morphisms from $M$ to $N$ given by equivalence classes of triples
$$ M \overset{f}{\hookleftarrow} M^{\prime} \xra{g} N^{\prime} \overset{h}{\twoheadleftarrow} N$$
with $\coker f, \ker h \in \B$.
It is the initial category under $\C$ in which all $\B$-isomorphisms have been inverted.

$M \in \C$ is {\em $\B$--reduced} if $\Hom_{\C}(N,M) = 0$ for all $N \in \B$, and is {\em $\B$--closed} if also $\Ext^1_{\C}(N,M) = 0$ for all $N \in \B$.

The exact quotient functor $l: \C \ra \C/\B$ has a right adjoint $r$.  The counit of the adjunction $\epsilon_M: lr(M) \ra M$ is always an isomorphism. The unit of the adjunction $\eta_M: M \ra rl(M)$ is thus idempotent, and is called {\em localization away from $\B$} or {\em $\B$--closure}.

The functor $k: \C \ra \B$ right adjoint to the inclusion $\B \hra \C$ can be computed by the formula $k(M) = \ker(\eta_M)$.

\subsection{Locally finite objects and the Krull filtration}

An object in an abelian category $\C$ is {\em simple} if it has no non-zero proper subobjects, {\em finite} if it has a filtration of finite length with simple composition factors, and {\em locally finite} if it is the a union of its finite subobjects.

We let $\C_0$ denote the full subcategory consisting of the locally finite objects of $\C$.  If $\C$ is locally Noetherian, $\C_0$ will be closed under extensions, and it follows that $\C_0$ is localizing.

\begin{defn} \cite[p.382]{gabriel} The {\em Krull filtration} of a locally Noetherian abelian category $\C$ is the increasing sequence of localizing subcategories
$$ \C_0 \subset \C_1 \subset \C_2 \subset $$
recursively defined for $n\geq 1$ by letting $\C_n$ be the full subcategory of $\C$ whose objects are the objects in $\C$ that represent locally finite objects in $\C/\C_{n-1}$.
\end{defn}

\begin{rem} Gabriel defines $\C_{\lambda}$ for any ordinal $\lambda$.  For example, $\C_{\omega}$ is defined as the smallest localizing category containing all the $\C_n$.  \thmref{F(1) thm} implies that $\U_{\omega} = \U$, so the Krull filtration for $\U$ stops at this point.
\end{rem}

\section{Background: unstable modules} \label{U section}

In this section we  recall some basic material about unstable modules.  A general reference for this is \cite{s2}.

\subsection{The categories $\U$ and $\K$}  The mod $p$ Steenrod algebra $\A$ is generated by $Sq^k$, $k\geq 0$, when $p=2$, and $P^k$, $k \geq 0$, together with the Bockstein $\beta$ when $p$ is odd, and satisfying the usual {Adem relations}.

The category $\U$ is then defined to be the full subcategory of $\A$--modules $M$ satisfying the {unstable condition}.  When $p=2$ this means that, for all $x\in M$, $Sq^k x = 0$ if $k> |x|$.  When $p$ is odd, the condition is that, for all $x\in M$ and $e = 0$ or $1$, $\beta^{e}P^k x = 0$ if $2k +e > |x|$.

The abelian category $\U$ has a tensor product coming from the {Cartan formula}, and $\K$ is then defined to be the category of commutative algebras $K$ in $\U$ also satisfying the {restriction condition}: $Sq^{|x|}x = x^2$ for all $x \in K$ when $p=2$, and $P^{|x|/2}x = x^p$ for all even degree $x \in K$ when $p$ is odd.

All of these definitions are, of course, motivated by the fact that, if $X$ is a topological space, its mod $p$ cohomology $H^*(X)$ is naturally an object in both $\U$ and $\K$.

\subsection{The Frobenius functor $\Phi$}

For $M \in \U$, $P_0: M \ra M$ is defined by
\begin{equation*}
P_0 x =
\begin{cases}
Sq^k x & \text{if } k=|x| \text{ and } p=2
\\ \beta^eP^kx = 0  & \text{if } 2k + e = |x| \text{ and $p$ is odd}.
\end{cases}
\end{equation*}

When $p=2$, the Frobenius functor $\Phi: \U \ra \U$ is defined by letting
\begin{equation*}
(\Phi M)^{m} =
\begin{cases}
M^n & \text{if } m=2n
\\ 0  & \text{otherwise, }
\end{cases}
\end{equation*}
with $Sq^{2k}\phi(x) = \phi(Sq^kx)$, where $\phi(x) \in (\Phi M)^{2n}$ corresponds to $x \in M^n$.

At odd primes, $\Phi: \U \ra \U$ is defined by letting
\begin{equation*}
(\Phi M)^{m} =
\begin{cases}
M^{2n+e} & \text{if } m=2pn+2e, \text{ with } e=0,1
\\ 0 & \text{otherwise,}
\end{cases}
\end{equation*}
with $P^{pk}\phi(x) = \phi(P^kx)$, and $P^{pk+1}\phi(x) = \phi(\beta P^kx)$ when $|x|$ is odd.

$\Phi$ is an exact functor, and there is a natural transformation of unstable modules
$$ \lambda: \Phi M\ra M$$
defined by $\lambda(\phi(x)) = P_0x$.

Let $\Omega: \U \ra \U$ be left adjoint to $\Sigma$. Explicitly $\Omega M$ is the largest unstable submodule of $\Sigma^{-1}M$.  This has just one nonzero right derived functor $\Omega^1$, and these can be calculated via an exact sequence
$$ 0 \ra \Sigma \Omega^1 M \ra \Phi(M) \xra{\lambda} M \ra \Sigma \Omega M \ra 0.$$

$\lambda: \Phi M\ra M$ is monic exactly when $M$ is $\Nil$--reduced, and, in this case, the evident iterated natural map $\lambda_k: \Phi^k M \ra M$ is still monic.

\subsection{The nilpotent filtration of a module}

Let $nil_s: \U \ra \Nil_s$ be right adjoint to the inclusion $\Nil_s \hra \U$. Explicitly, $nil_sM \subset M$ is the largest submodule of $M$ contained in $\Nil_s$.  We obtain a natural decreasing filtration of $M$:
$$ M = nil_0M \supset nil_1M \supset nil_2M \supset nil_3M \supset \dots .$$
This filtration is complete as $nil_s M$ is $(s-1)$--connected.

\begin{prop/def} $nil_sM/nil_{s+1}M = \Sigma^s R_sM$,
where $R_sM$ is $\Nil$-reduced.
\end{prop/def}

For a proof see \cite[Lemma 6.4.1]{s2} or \cite[Prop. 2.2]{k1}

We let $\bar R_sM$ denote the $\Nil$--closure of $R_sM$.

\begin{prop} \cite[Cor. 3.2]{k5} $\bar R_s: \U \ra \U$ is left exact.
\end{prop}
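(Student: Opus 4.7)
The plan is to realize $\bar R_s$ as a composition of left exact functors, from which left exactness follows formally. The factorization routes through the Schwartz equivalence $\Sigma^s: \U/\Nil \xra{\simeq} \Nil_s/\Nil_{s+1}$, converting the definition of $R_sM$ (whose direct construction involves the quotient $nil_sM/nil_{s+1}M$, which is only right exact) into a chain of adjoint and equivalence operations.

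Concretely, I would write
\begin{equation*}
\bar R_s: \U \xra{\ nil_s \ } \Nil_s \xra{\ q \ } \Nil_s/\Nil_{s+1} \xra{\ \Sigma^{-s} \ } \U/\Nil \xra{\ r \ } \U,
\end{equation*}
where $nil_s$ is right adjoint to the inclusion $\Nil_s \hra \U$ and hence left exact, $q$ is the Serre quotient and hence exact, $\Sigma^{-s}$ is inverse to Schwartz's equivalence and hence exact, and $r$ is right adjoint to the localization $l: \U \ra \U/\Nil$ and hence left exact. Since left exact functors are stable under composition, left exactness of $\bar R_s$ is immediate once the composite has been identified with $\bar R_s$.

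The only step requiring real care is this identification. Starting from $M \in \U$, the short exact sequence $0 \ra nil_{s+1}M \ra nil_sM \ra \Sigma^s R_sM \ra 0$ shows that $q(nil_sM) \cong \Sigma^s R_sM$ in $\Nil_s/\Nil_{s+1}$, since the kernel $nil_{s+1}M \in \Nil_{s+1}$ is annihilated by $q$. Applying $\Sigma^{-s}$ returns the class of $R_sM$ in $\U/\Nil$, and because $R_sM$ is $\Nil$--reduced by the Prop/Def, applying $r$ produces the $\Nil$--closure of $R_sM$, namely $\bar R_sM$ by definition. The main obstacle, such as it is, is bookkeeping: one must confirm that each of these identifications is natural in $M$, so that the four isomorphisms assemble into a natural isomorphism of functors $\U \ra \U$. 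With that in place, no further calculation is needed.
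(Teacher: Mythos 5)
Your factorization
\[
\bar R_s \;=\; r \circ \Sigma^{-s} \circ q \circ nil_s : \U \lra \Nil_s \lra \Nil_s/\Nil_{s+1} \lra \U/\Nil \lra \U
\]
is correct, and each step of the identification holds: $q$ kills $nil_{s+1}M$, so $q(nil_sM)\cong q(\Sigma^sR_sM)$; the Schwartz equivalence is by construction compatible with the suspension in the sense that $q\circ\Sigma^s = \bar\Sigma^s\circ l$, so $\Sigma^{-s}q(nil_sM)\cong l(R_sM)$; and $rl(R_sM) = \bar R_sM$ since $rl$ is $\Nil$--closure. Since $nil_s$ and $r$ are right adjoints and the two middle functors are exact, the composite is left exact, and the naturality you flag is routine because the defining short exact sequence $0\to nil_{s+1}M\to nil_sM\to \Sigma^sR_sM\to 0$ is natural in $M$.

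One thing to be aware of: the paper offers no proof of this proposition, only a citation to \cite[Cor.~3.2]{k5}, so there is nothing in the present text to compare against. Your argument is a clean, self-contained alternative that converts what might look like a computational fact into a purely formal consequence of the adjunctions $\Nil_s\hra\U$, $\U/\Nil\ra\U$ and the Schwartz equivalence. In particular it avoids any computation with $\U$--injectives or the functor category $\F^{an}$. If anything, it is worth spelling out explicitly that the proof uses left exactness rather than exactness of $r$ and $nil_s$, since the naive guess $\bar R_s = $``$\Nil$--closure of $R_s$'' does not immediately give left exactness: $R_s$ itself (as a quotient) is not left exact, and your reorganization into $r\circ\Sigma^{-s}\circ q\circ nil_s$ is precisely what dodges that obstacle.
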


\begin{prop} \cite[Prop. 2.11]{k1} If $M$ is locally finite, then the nilpotent filtration equals the skeletal filtration, so that $M^s = R_s M = \bar R_s M$.
\end{prop}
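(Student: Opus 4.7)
The plan is to first identify the nilpotent filtration on a locally finite module with the degree (``skeletal'') filtration $nil_sM = M^{\geq s}$, and then to read off the remaining claims. Once this identification is in hand, $R_sM = M^s$ is immediate, and $\Nil$-closedness of $R_sM$ has to be checked by hand. The crux is the reverse inclusion $M^{\geq s} \subseteq nil_sM$, which is where the local finiteness of $M$ is essential (without it, $M^{\geq s}$ need not lie in $\Nil_s$: for instance $\tilde H^*(B\Z/p)$ is not nilpotent at all).

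For $nil_sM \subseteq M^{\geq s}$, every module in $\Nil_s$ is $(s-1)$--connected: this property is preserved under subobjects, quotients, extensions, and arbitrary direct sums, and the generating $s$--fold suspensions have it. For the reverse containment, I use that the simple objects of $\U$ are precisely the trivial-action modules $\Sigma^n\Z/p$ (a simple $S$ is cyclic on a lowest-degree class of some degree $n$, hence a simple quotient of $F(n)$, whose only such quotient is $\Sigma^n\Z/p$). Writing the locally finite $M$ as a filtered colimit of finite submodules $M_\alpha$ and using closure of $\Nil_s$ under filtered colimits, it suffices to prove $(M_\alpha)^{\geq s} \in \Nil_s$ for finite $M_\alpha$. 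Inducting on composition length via a short exact sequence $0 \ra A \ra B \ra \Sigma^n\Z/p \ra 0$, one checks that the truncated sequence $0 \ra A^{\geq s} \ra B^{\geq s} \ra (\Sigma^n\Z/p)^{\geq s} \ra 0$ remains short exact, with the right-hand term being either $0$ (when $n<s$) or $\Sigma^n\Z/p = \Sigma^s(\Sigma^{n-s}\Z/p) \in \Nil_s$ (when $n \geq s$); since $\Nil_s$ is closed under extensions, the induction closes.

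From $nil_sM = M^{\geq s}$ one reads off $nil_sM/nil_{s+1}M = M^s$, concentrated in degree $s$ with trivial positive Steenrod operations (any such operation would land in $M^{\geq s+1}$, which has been quotiented out). Comparing with $nil_sM/nil_{s+1}M = \Sigma^sR_sM$ gives $R_sM = M^s$ in degree $0$ with trivial Steenrod action, which is visibly $\Nil$-reduced. To conclude $\bar R_sM = R_sM$ I verify that $R_sM$ is already $\Nil$-closed. For any $N \in \Nil$, $\Hom_{\U}(N,R_sM) = 0$ for degree reasons, since $N$ is $0$-connected while $R_sM$ lives in degree $0$. Given an extension $0 \ra R_sM \ra E \ra N \ra 0$ with $N \in \Nil$, one has $E^0 = R_sM$ and all positive-degree Steenrod operations act trivially on $E^0$ (since $R_sM$ is an $\A$-submodule concentrated in degree $0$); the projection $E \ra R_sM$ given by the identity on $E^0$ and zero in positive degrees is then $\A$-linear and splits the sequence, so $\Ext^1_{\U}(N, R_sM) = 0$ and we are done.
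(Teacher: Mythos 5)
Your proof is correct, and it follows essentially the standard line of argument one would expect for \cite[Prop.~2.11]{k1} (the paper itself only cites the result). You identify $nil_sM$ with $M^{\geq s}$ by showing both inclusions — the forward one from $(s-1)$-connectedness of $\Nil_s$, the reverse one by reducing to finite $M$ via filtered colimits and then inducting along composition series with simple quotients $\Sigma^n\Z/p$ — and then read off $R_sM = M^s$ (in degree $0$, trivial Steenrod action) and verify $\Nil$-closedness directly by exhibiting the degree-$0$ projection as an explicit $\A$-linear splitting of any extension by an object of $\Nil$. All the steps check out: $M^{\geq s}$ is a submodule because Steenrod operations raise degree, truncation at degree $s$ is exact degreewise, and your splitting map is $\A$-linear precisely because $R_sM$ is concentrated in degree $0$ as a submodule of $E$.
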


\subsection{Finitely generated modules}

We need various results about unstable modules which are finitely generated over $\A$.

\begin{thm} \label{finite gen thm} {\bf (a)} \   A submodule of a finitely generated unstable module is again finitely generated.  Thus $\U$ is locally Noetherian.

\noindent{\bf (b)} \ The nilpotent filtration of a finitely generated module has finite length.

\noindent{\bf (c)} \ A finitely generated unstable module represents a finite object in $\U/\Nil$.

\end{thm}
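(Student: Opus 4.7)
The plan is to prove (a), (b), (c) in the order stated, since each part builds on the preceding one.

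For (a), the second assertion follows formally from the first: every object of $\U$ is the filtered union of its finitely generated subobjects (the $F(n)$ form a generating set), so once each such subobject is Noetherian, $\U$ is locally Noetherian. For the first assertion, any finitely generated $M$ is a quotient of a finite direct sum $\bigoplus_i F(n_i)$, and since Noetherianity is preserved under subobjects, quotients, and finite direct sums, the problem reduces to showing that each free module $F(n)$ is Noetherian. This is the main obstacle and is the classical theorem of Kuhn and Schwartz; I would follow their approach via the admissible-monomial basis of $F(n)$, showing that submodules are determined by combinatorial data on admissible sequences of bounded excess for which ascending chains terminate. A complete proof is given in \cite{s2}.

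For (b), given (a) each $nil_s M$ is finitely generated, so the question is whether the decreasing chain $\{nil_s M\}$ terminates. I would induct on the number of $\A$-generators of $M$. In the cyclic case, $M$ is a quotient of some $F(d)$, so it suffices to show that the nilpotent filtration of $F(d)$ itself has finite length. This can be read off the admissible-monomial description: the admissible sequences occurring in $F(d)$ have excess at most $d$, which bounds the depth of the iterated Frobenius layer and hence of the nilpotent filtration. For the inductive step, apply the left-exact functor $nil_s$ to a short exact sequence $0 \to M' \to M \to M'' \to 0$ in which $M'$ is cyclic and $M''$ has one fewer generator; this combines the finite-length bounds from the two ends.

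For (c), the image of $M$ in $\U/\Nil$ agrees with the image of $R_0 M = M/nil_1 M$, which by (a) is finitely generated and $\Nil$-reduced, so it suffices to prove that $R_0 M$ represents a finite object of $\U/\Nil$. Under the Henn--Lannes--Schwartz equivalence $\U/\Nil \simeq \F^{an}$ of \cite{hls1}, $R_0 M$ corresponds to an analytic functor $F$. Finite generation of $R_0 M$, combined with the polynomial growth of $\A$, implies that $F$ is polynomial of bounded degree with finite-dimensional values in each dimension. Iterating the recollement equivalences $\F^n/\F^{n-1} \simeq \Z/p[\Sigma_n]\text{--modules}$ of (\ref{Fn/Fn-1thm}) then produces a finite composition series for $F$ in $\F^{an}$, which gives the required finiteness in $\U/\Nil$.
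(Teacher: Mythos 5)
The paper's own treatment of this theorem is just the one-line citation ``Proofs of these appear in [S2] and [K1],'' so there is no in-paper argument to compare against; I will evaluate your outline on its own terms.

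For (a), the reduction to the Noetherianity of $F(n)$ is correct and standard (Noetherianity passes to subobjects, quotients, and finite sums), and deferring the substantive part to the literature is reasonable, matching the paper's own stance.

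For (c), your conclusion is right but the route is heavier than needed, and one claim is shaky. The paper's remark at the end of \secref{schwartz thm section} points to the cleaner argument: $l$ is exact, $l(F(n)) = H_n$, and $H_n$ is a finite object of $\F$ by an elementary direct argument \cite[Prop.~4.10]{genrep1}; so $l(M)$ is a quotient of a finite sum of finite functors and hence finite. Your step ``polynomial growth of $\A$ implies $F$ is polynomial of bounded degree'' is vague; the honest reason $l(M)$ is polynomial of bounded degree is precisely that it is a quotient of $\bigoplus_i H_{n_i}$. The detour through $R_0M$ and the recollement is unnecessary.

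For (b), there is a genuine gap, and it is the main problem with the proposal. You reduce the cyclic case to showing that ``the nilpotent filtration of $F(d)$ itself has finite length.'' But $F(d)$ is $\Nil$--reduced: $\lambda\colon \Phi F(d) \to F(d)$ is injective, as one checks directly on the admissible basis using the excess bound $e(I)\le d$, so $nil_1 F(d)=0$ and the nilpotent filtration of $F(d)$ is trivially finite. This carries no information about quotients: $nil_s$ is only left exact, and a quotient of $F(d)$ can have arbitrarily long nilpotent filtration. For example $M = F(1)/\Phi^k F(1)$ is cyclic, generated in degree $1$, and since $M$ is finite its nilpotent filtration equals its skeletal filtration, which has length $2^{k-1}+1$; this grows without bound with $k$ even though the generating degree is fixed at $1$. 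So the claimed bound in terms of excess of $F(d)$ is false, and the reduction ``it suffices to prove it for $F(d)$'' is unsound. Your phrase ``bounds the depth of the iterated Frobenius layer and hence of the nilpotent filtration'' suggests a picture of $F(d)$ with a nontrivial filtration by Frobenius layers of depth $\le d$, but $F(d)$ is reduced, and the relevant layers live in quotients, not in $F(d)$ itself. The inductive step via left exactness of $nil_s$ applied to $0 \to M' \to M \to M'' \to 0$ is fine once the cyclic base case is secured, but that base case needs a different argument (and cannot simply bound the length by the generating degree).
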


Proofs of these appear in \cite{s2} and \cite{k1}.

\begin{cor} \label{gen cor} Any localizing subcategory of $\U$ will be generated by modules of the form $\Sigma^s M$, with $M$ finitely generated, $\Nil$--reduced, and representing a simple object in $U/\Nil$.
\end{cor}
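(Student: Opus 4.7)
The plan is to show that the localizing subcategory $\B' \subseteq \U$ generated by all modules $\Sigma^s M \in \B$ with $M$ finitely generated, $\Nil$--reduced, and representing a simple object of $\U/\Nil$ coincides with $\B$. Because $\U$ is locally Noetherian (\thmref{finite gen thm}(a)), every object of $\B$ is the filtered union of its finitely generated subobjects, each again in $\B$; since localizing subcategories are closed under filtered colimits, it suffices to show that each finitely generated $N \in \B$ lies in $\B'$.

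For such $N$, I would first use the nilpotent filtration to strip off suspensions. By \thmref{finite gen thm}(b) the filtration $N = nil_0 N \supseteq nil_1 N \supseteq \dots$ has finite length, with successive subquotients of the form $\Sigma^s R_s N$, where $R_s N$ is $\Nil$--reduced and finitely generated (as a subquotient of a finitely generated module). All modules appearing in this filtration lie in $\B$, so by closure of $\B'$ under extensions the problem reduces to showing each $\Sigma^s R_s N$ lies in $\B'$. Applying $\Sigma^s$ term-by-term to a filtration, this reduces further to the claim that every finitely generated, $\Nil$--reduced $P \in \B$ admits a finite filtration $0 = K_0 \subseteq K_1 \subseteq \dots \subseteq K_m = P$ whose successive subquotients are finitely generated, $\Nil$--reduced, lie in $\B$, and represent simple objects of $\U/\Nil$.

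I would prove this last claim by induction on the length of $l(P)$ in $\U/\Nil$, where $l$ denotes the quotient functor; this length is finite by \thmref{finite gen thm}(c). If $l(P)$ is simple, $P$ itself is of the required form. Otherwise, choose a maximal proper subobject $T \subsetneq l(P)$, so that $l(P)/T$ is simple. To lift $T$, pick any submodule $K_0 \subseteq P$ with $l(K_0) = T$ and set $K \subseteq P$ to be the preimage of $nil_1(P/K_0)$; then $P/K = (P/K_0)/nil_1(P/K_0)$ is $\Nil$--reduced, and $l(K) = l(K_0) = T$ since $l$ annihilates $\Nil$. Consequently $P/K$ is a finitely generated, $\Nil$--reduced quotient of $P$ in $\B$ with $l(P/K)$ simple, while $K$ is a finitely generated, $\Nil$--reduced submodule of $P$ in $\B$ whose image $l(K)$ has strictly smaller length. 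The inductive hypothesis applied to $K$ gives the desired filtration, which we extend to $P$ by adjoining the top piece. The main technical point---and the step requiring the most care---is the existence of the submodule $K_0 \subseteq P$ with $l(K_0) = T$, i.e., that subobjects in the quotient category $\U/\Nil$ can be lifted to actual submodules; this is a standard feature of localizing subcategories in Grothendieck abelian categories but warrants explicit justification.
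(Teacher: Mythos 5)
Your plan --- reduce to finitely generated objects via local Noetherianness, strip off suspensions with the finite nilpotent filtration, and then filter the reduced layers to simple subquotients using finite length in $\U/\Nil$ --- is exactly the natural deduction from the three parts of \thmref{finite gen thm}; the paper leaves the proof of this corollary to the reader. The subobject-lifting step you flag (that a subobject of $l(P)$ in $\U/\Nil$ lifts to a submodule of $P$) is a standard property of quotients of Grothendieck categories by localizing subcategories (Gabriel's thesis, Ch.~III), and your $nil_1$-adjustment to keep the successive quotients $\Nil$-reduced is the right move.

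There is, however, one small but real slip in the reduction chain. You reduce ``show $\Sigma^s R_s N \in \B'$'' to a claim about a finitely generated, $\Nil$-reduced $P \in \B$ whose filtration subquotients also lie in $\B$, and you intend to apply this with $P = R_s N$. But you only know $\Sigma^s R_s N \in \B$, not $R_s N \in \B$: a localizing subcategory need not be closed under the desuspension functor $\Omega$ (for instance $\Sigma F(1) \in \Nil$ while $F(1) \notin \Nil$), so you cannot desuspend and remain inside $\B$. The fix is simply to drop the ``$\in \B$'' conditions from both the hypothesis and the conclusion of the intermediate claim. Your induction then proves, with no changes, that \emph{every} finitely generated $\Nil$-reduced $P$ has a finite filtration whose subquotients are finitely generated, $\Nil$-reduced, and simple in $\U/\Nil$. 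After suspending such a filtration of $R_s N$ by $\Sigma^s$, each subquotient $\Sigma^s Q_i$ lands in $\B$ automatically, being a subquotient of $\Sigma^s R_s N \in \B$; that is what you actually need for $\Sigma^s Q_i$ to be one of the permitted generators, and membership of $Q_i$ itself in $\B$ was never the right thing to ask for.
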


We will also need the following lemma.

\begin{lem} \label{phi lemma} Let $M$ be $\Nil$--reduced and finitely generated, and let $i: N \hra M$ be the inclusion of a submodule with $M/N \in \Nil$.  Then, for $k\gg 0$, the inclusion $\lambda_k: \Phi^k M \hra M$ factors through $i$.
\end{lem}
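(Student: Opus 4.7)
The plan is to reduce the problem to showing that the iterated map $\lambda_k^{M/N}$ is eventually zero, and then to exploit the fact that $M/N$ is finitely generated and lies in $\Nil$ of finite nilpotent length.

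First I would observe that $\lambda_k: \Phi^k M \to M$ lands in $N$ if and only if the composite $\Phi^k M \to M \twoheadrightarrow M/N$ vanishes. By naturality of $\lambda_k$ applied to the projection $\pi: M \twoheadrightarrow M/N$, this composite factors as $\lambda_k^{M/N} \circ \Phi^k(\pi)$, and since $\Phi^k$ is exact the map $\Phi^k(\pi)$ is surjective. So it suffices to prove $\lambda_k^{M/N} = 0$ for $k$ large, noting that $M/N$ is finitely generated (as a quotient of $M$) and lies in $\Nil$.

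The heart of the argument is the following lemma: for $Q \in \Nil_s$, the map $\lambda^Q: \Phi Q \to Q$ factors through $nil_{s+1} Q \hookrightarrow Q$. To see this, use the exact sequence $0 \to nil_{s+1} Q \to Q \to \Sigma^s R_s Q \to 0$ (valid since $Q = nil_s Q$) to identify $Q/nil_{s+1} Q$ as a suspension, and check directly that $\lambda$ vanishes on any suspension: at $p = 2$, $P_0(\Sigma y) = Sq^{|y|+1}(\Sigma y) = \Sigma(Sq^{|y|+1} y) = 0$ by the unstable condition on $y$, with the analogous computation at odd primes. Naturality of $\lambda$ then forces the composite $\Phi Q \to Q \to \Sigma^s R_s Q$ to be zero.

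Iterating via the identity $\lambda_{k+1} = \lambda \circ \Phi(\lambda_k)$, together with the bookkeeping $nil_{s+1}(nil_s Q) = nil_{s+1} Q$ for submodules, a straightforward induction on $k$ shows that for every $Q \in \Nil$ the map $\lambda_k^Q$ factors through $nil_{k+1} Q$. Applied to $Q = M/N$ and combined with \thmref{finite gen thm}(b), which guarantees $nil_t(M/N) = 0$ for some $t$, this yields $\lambda_k^{M/N} = 0$ for all $k \geq t - 1$, completing the proof. The main obstacle is the key lemma: although the elementary calculation that $\lambda$ kills a suspension is easy, one must be careful with the naturality argument and with the identifications $Q/nil_{s+1} Q \simeq \Sigma^s R_s Q$ and $nil_s(nil_t Q)$ for $s \geq t$; once the lemma is in place, the rest is formal.
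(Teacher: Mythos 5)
Your proof is correct. The paper's own argument is more terse: take generators $x_1,\dots,x_l$ of $M$, observe that since $M/N\in\Nil$ each image $\bar x_i$ is killed by $P_0^{k_i}$ for some $k_i$ (this is the standard characterization of $\Nil$ as the locally $P_0$--nilpotent modules), take $k=\max k_i$, and conclude that $\lambda_k(\Phi^kM)=\A\cdot\{P_0^kx_1,\dots,P_0^kx_l\}\subseteq N$. Your approach replaces the appeal to this characterization with a structural argument: you prove as a lemma that for $Q\in\Nil_s$ ($s\geq1$) the map $\lambda^Q$ factors through $nil_{s+1}Q$ (using that $Q/nil_{s+1}Q\simeq\Sigma^sR_sQ$ is a suspension, and the elementary computation that $\lambda$ kills suspensions), iterate to get $\lambda_k^Q$ factoring through $nil_{k+1}Q$, and then invoke \thmref{finite gen thm}(b) to bound the nilpotent filtration of the finitely generated quotient $M/N$. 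The two proofs express the same core mechanism---iterating $P_0$ eventually annihilates a finitely generated nilpotent module---but obtain the uniform bound differently: the paper from a finite generating set of $M$, you from the finite length of the nilpotent filtration of $M/N$. Your route is longer but more self-contained, essentially re-deriving the $P_0$--nilpotence characterization of $\Nil$ rather than citing it; it also makes the dependence on \thmref{finite gen thm}(b) explicit, whereas the paper uses only part (a).
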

\begin{proof}  Let $x_1, \dots, x_l$ generate $M$.  Since $M/N \in \Nil$, there exists $k$ such that $P_0^k(x_i) \in N$ for all $i$.  But then the image of $\lambda^k$ will be contained in $N$.
\end{proof}

\subsection{$\U$--injectives and properties of $\bar T$}

Let $J(n)$ be the $n$th `Brown--Gitler module': the finite injective representing $M \rightsquigarrow (M^n)^{\vee}$ \cite[\S2.3]{s2}.

\begin{thm}  $H^*(BV) \otimes J(n)$ is injective in $\U$, and every $\U$--injective is a direct summand of a direct sum of such modules.
\end{thm}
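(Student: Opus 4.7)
The plan is to verify both assertions: injectivity of $H^*(BV) \otimes J(n)$, and that every $\U$--injective arises as a summand of a direct sum of such modules.

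For injectivity, my plan is two--step. First, $J(n)$ represents the functor $M \mapsto (M^n)^\vee$, the composite of the exact functors ``extract the degree--$n$ part'' and ``take $\Z/p$--linear dual''; representing an exact functor, $J(n)$ is injective. Second, I would invoke Lannes' fundamental theorem, which supplies the exact left adjoint $T_V$ to $H^*(BV) \otimes -$; since a right adjoint of an exact functor preserves injective objects, $H^*(BV) \otimes J(n)$ is injective.

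For the structural statement, I would proceed by injective cogeneration combined with the locally Noetherian structure of $\U$. Given any nonzero $M \in \U$ and any nonzero $x \in M^n$, the adjunction $\Hom_{\U}(M, J(n)) = (M^n)^\vee$ produces a map $M \to J(n)$ nonzero on $x$; since $J(n) = H^*(BV) \otimes J(n)$ for $V = 0$, the family $\{H^*(BV) \otimes J(n)\}$ is cogenerating. Hence every $M$ embeds into a product of members of the family, and when $M$ is injective this embedding splits.

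To promote ``summand of a product of injectives'' to ``summand of a direct sum of members of the family,'' I would appeal to the locally Noetherian structure of $\U$ (\thmref{finite gen thm}): every injective then decomposes as a direct sum of indecomposable injectives, each being the injective envelope of a Noetherian subobject. The main obstacle is to verify that each indecomposable injective embeds (necessarily as a summand) into some $H^*(BV) \otimes J(n)$. This is a Lannes--Zarati style calculation that uses the polynomial structure of $H^*(BV)$ in an essential way; it is here that the $H^*(BV)$ factor is genuinely needed, since the Brown--Gitler modules $J(n)$ alone do not realize every indecomposable injective as a summand.
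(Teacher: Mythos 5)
The paper does not prove this theorem; it is stated as background and attributed to the literature (the ``All of this is in the literature'' remark, pointing to \cite{L}, \cite{ls2}, \cite{lz1}, \cite{lz2}), so there is no proof in the paper to compare against, and your proposal must be judged on its own merits.

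Your argument for the injectivity of $H^*(BV) \otimes J(n)$ is circular. For a pair of adjoint functors $F \dashv G$ between abelian categories with enough injectives, $F$ is exact \emph{if and only if} $G$ preserves injectives. Applied to $T_V \dashv (H^*(BV) \otimes -)$, this means that ``$T_V$ is exact'' and ``$H^*(BV) \otimes -$ preserves injectives'' are logically equivalent; neither can be used as an independent input to deduce the other. In the standard development (see \cite[Ch.~3]{s2}), the injectivity of $H^*(BV) \otimes J(n)$ is the Lannes--Zarati theorem, proved directly (building on Carlsson--Miller--Lannes--Zarati injectivity of $H^*(BV)$ itself), and the exactness of $T_V$ is \emph{deduced} from it precisely by the adjunction argument you give --- run in the other direction. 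So invoking exactness of $T_V$ here presupposes the conclusion. A correct proof must replace your second step by the actual Lannes--Zarati argument.

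Your outline of the second part is essentially right in shape: the family cogenerates, $\U$ is locally Noetherian, injectives decompose into indecomposables with local endomorphism rings, and the crux is that each indecomposable injective is a summand of some $H^*(BV) \otimes J(n)$. You correctly flag this last step as the main obstacle; it is exactly the classification of indecomposable $\U$--injectives as $L_\lambda \otimes J(n)$ (with $L_\lambda$ an indecomposable summand of some $H^*(BV)$) due to Lannes and Schwartz \cite{ls2}, and it is a substantial theorem rather than a formal consequence of cogeneration. So your proposal locates the hard points accurately, but the first part as written would not constitute a proof, and the second part leaves the essential difficulty open.
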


\begin{thm}  {\bf (a)}  $T$, and thus $\bar T$, is exact.

\noindent{\bf (b)}  The natural map $T(M \otimes N) \ra TM \otimes TN$ is an isomorphism.

\noindent{\bf (c)}  $T$, and thus $\bar T$, preserves both $\Nil$--reduced and $\Nil$--closed modules.

\noindent{\bf (d)} $T$, and thus $\bar T$, commutes with the following functors: $\Sigma^s$, $\Phi$, $nil_s$, $R_s$, and $\bar R_s$.
\end{thm}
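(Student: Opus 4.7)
The plan is to prove each statement for $T$ and then derive the $\bar T$ version from the natural splitting $TM = \bar T M \oplus M$: exactness and tensor compatibility descend to direct summands, and a natural isomorphism $TF \simeq FT$ restricts to $\bar T F \simeq F \bar T$ by cancellation, since each $F$ in (d) preserves direct sums.

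For (a), $T$ is right exact as a left adjoint, so it suffices to show that the right adjoint $R(-) = (-) \otimes H^*(B\Z/p)$ preserves injectives. By the classification in the preceding theorem this reduces to checking on the generators:
$$R(H^*(BV) \otimes J(n)) = H^*(B(V \oplus \Z/p)) \otimes J(n),$$
again of the required form. For (b), I would construct $\tau_{M,N}: T(M \otimes N) \ra TM \otimes TN$ as the adjunct of the composite
$$M \otimes N \xra{\eta_M \otimes \eta_N} TM \otimes H^*(B\Z/p) \otimes TN \otimes H^*(B\Z/p) \lra TM \otimes TN \otimes H^*(B\Z/p),$$
where the second map uses the multiplication on $H^*(B\Z/p)$ induced by the diagonal of $B\Z/p$. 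Since both sides are exact and preserve filtered colimits in each variable, resolving $M$ and $N$ by injectives reduces checking $\tau$ to the case $M = H^*(BV) \otimes J(m)$ and $N = H^*(BW) \otimes J(n)$, where the comparison becomes an explicit identification via the adjunction.

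For (d), commutation with $\Sigma$ (hence with $\Sigma^s$) is the key input. I would compare the two adjoint descriptions of $\Hom_\U(T\Sigma M, X)$ and $\Hom_\U(\Sigma TM, X)$ and reduce to injective $X$, where the comparison follows from the explicit behavior of $T$ on $\Sigma J(n)$ and $H^*(BV)$. Commutation with $\Phi$ then follows from (b), exactness, and degree-wise matching using the defining formulas for $\Phi$. Since $\Nil_s$ is by definition the smallest localizing subcategory containing the $s$-fold suspensions, $T\Sigma^s \simeq \Sigma^s T$ together with exactness and direct-sum preservation yield $T(\Nil_s) \subseteq \Nil_s$; the universal property of $nil_s$ as the largest submodule in $\Nil_s$ then gives $T \circ nil_s \simeq nil_s \circ T$. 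Commutation with $R_s = \Sigma^{-s}(nil_s/nil_{s+1})$ is then formal, and commutation with $\bar R_s$ follows once $T$ is known to preserve $\Nil$-closure.

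For (c), the $\Nil$-reduced case uses the criterion that $M$ is $\Nil$-reduced iff $\lambda_M: \Phi M \to M$ is monic: applying exact $T$ and $T\Phi \simeq \Phi T$ from (d) gives $\Phi T M \hookrightarrow T M$. For $\Nil$-closure I would use the characterization as $\Ext^1_\U(N, -) = 0$ for $N \in \Nil$, together with the long exact sequence associated with an embedding $0 \to M \to I \to I/M \to 0$ into a reduced injective $I$; since $T$ is exact and sends reduced injectives to reduced injectives (by tracking the decomposition $H^*(BV) \otimes J(n)$ through the adjunction), the required vanishing descends to $TM$. The main obstacle across the whole plan is part (b) together with the commutation with $\Sigma$ in (d): both rest on turning the abstract adjunction characterization of $T$ into explicit identifications on the generating injectives $H^*(BV) \otimes J(n)$. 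Once these are in hand, the rest of the theorem follows formally.
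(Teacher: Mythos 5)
This theorem is stated in the paper as background and is not proved there; the paper simply cites the literature (\cite{L}, \cite{ls2}, \cite{lz1}, \cite{lz2}, \cite{k1}). So your proposal is not being measured against an argument in the paper, but against what those references actually establish. With that understood, your outline is the right general shape but several steps are treated as more ``formal'' than they really are.

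Part (a) is fine and is essentially the standard argument: $T$ is right exact as a left adjoint, the Lannes--Zarati theorem describes the injective cogenerators, and since $(-)\otimes H^*(B\Z/p)$ visibly sends $H^*(BV)\otimes J(n)$ to $H^*(B(V\oplus\Z/p))\otimes J(n)$ it preserves injectives, which (by a standard diagram chase using enough injectives) forces the left adjoint to be left exact. For (b), the reduction to injective coresolutions and then to the cogenerators is a correct strategy, but the phrase ``the comparison becomes an explicit identification via the adjunction'' hides the entire difficulty: it amounts to Lannes' computation of $T(H^*(BV)\otimes J(n))$, which is the real theorem, not a formality.

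The more serious gaps are in (d). Your claim that commutation with $\Phi$ ``follows from (b), exactness, and degree-wise matching'' does not hold up: $\Phi$ is not a tensor functor and the Cartan formula for $T$ says nothing about it. One has to build a natural map $T\Phi M \to \Phi TM$ (e.g.\ using the unit and the Verschiebung on $H^*(B\Z/p)$) and prove it an isomorphism by a separate argument; this is not a corollary of (b). The proposed proof of $T\Sigma\cong\Sigma T$ by ``comparing adjoint descriptions and reducing to injective $X$'' is also under-specified; the clean route is to construct $T\Sigma M \to \Sigma TM$ adjointly from $\Sigma\eta_M$, observe both sides are exact colimit-preserving functors, and check on the free modules $F(n)$ using $TF(n)\simeq\bigoplus_{k\le n}F(k)^{\oplus\binom{n}{k}}$. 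Finally, your step from $T(\Nil_s)\subseteq\Nil_s$ to $T\circ nil_s\simeq nil_s\circ T$ is not purely formal: exactness plus preservation of $\Nil_s$ only gives the inclusion $T(nil_sM)\subseteq nil_s(TM)$. Equality requires showing $T$ preserves $\Nil_s$-reduced modules, which one gets from the fact that both $T$ and $(-)\otimes H^*(B\Z/p)$ descend to $\U/\Nil_s$ and the adjunction descends with them; that needs to be said. Once $\Sigma^s$, $nil_s$, and $\Nil$-closure commutations are in hand, the commutations with $R_s$ and $\bar R_s$ really are formal, as you say.

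For (c), the $\Nil$-reduced case via $\lambda:\Phi M\hookrightarrow M$ is correct. For $\Nil$-closure, your argument tacitly requires $T$ to preserve injectives. That is true, but not because $T$ is a left adjoint (left adjoints preserve projectives); it follows from the explicit computation $T(H^*(BV)\otimes J(n))\simeq H^*(BV)^{\oplus p^{\dim V}}\otimes J(n)$ together with preservation of direct sums and summands. Worth flagging, since as written the claim ``$T$ sends reduced injectives to reduced injectives'' reads as if it were automatic.
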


All of this in the literature: see \cite{L}, \cite{ls2}, \cite{lz1}, \cite{lz2}, \cite{k1}.

\section{Background: polynomial functors}

\subsection{Definitions and examples} Recall that $\F$ is the category of functors from finite dimensional $\Z/p$--vector spaces to $\Z/p$--vector spaces.  This is an abelian category in the standard way, e.g., $F \ra G \ra H$ is exact at $G$ means that $F(V) \ra G(V) \ra H(V)$ is exact at $G(V)$ for all $V$.

Some objects in $\F$ are $S^n$, $H_n$, $P_W$, and $I_W$,  defined by $S^n(V) = (V^{\otimes n})_{\Sigma_n}$, $H_n(V) = H_n(BV)$, $P_W(V) = \Z/p[\Hom(W,V)]$ and $I_W(V) = \Z/p^{\Hom(V,W)}$.  Note that $H_0$ is the constant functor `$\Z/p$', and $H_1$ is the identity functor $\Id$.

Using Yoneda's lemma, one see that $\Hom_{\F}(P_W,F) \simeq F(W)$, and thus $P_W$ is projective.  Similarly $\Hom_{\F}(F, I_W) \simeq F(W)^{\vee}$, and thus $I_W$ is injective.  Note also that $P_V \otimes P_W \simeq P_{V \oplus W}$ and $I_V \otimes I_W \simeq I_{V \oplus W}$.  The functors $P_W$ and $I_W$ canonically split as $P_W \simeq \Z/p \oplus \bar P_W$ and $I_W \simeq \Z/p \oplus \bar I_W$, and we write $\bar P_{\Z/p}$ and $\bar I_{\Z/p}$ as $\bar P$ and $\bar I$.

$F(V)$ is a canonical retract of $F(V \oplus \Z/p)$ and, as in the introduction, one defines the exact functor $\Delta: \F \ra \F$ by
$$ (\Delta F)(V) = F(V \oplus \Z/p)/F(V).$$
One easily checks that $\Delta$ has a left adjoint given by $F \mapsto F \otimes \bar P$, and a right adjoint given by $F \mapsto F \otimes \bar I$.

$F$ is {\em polynomial of degree $n$} if $\Delta^{n+1}F = 0$.  As explained in \cite{genrep1} or \cite{s2}, this agrees with the Eilenberg--MacLane definition used in \cite{hls1}.  One then lets $\F^n$ be the category of all degree $n$ polynomial functors and $\F^{an}$ be the category of all locally polynomial functors. (As shown in \cite{genrep1}, $\F^{an}$ is also the locally finite category $\F_0 \subset \F$.) As examples, $\Id^{\otimes n}, S^n, H_n \in \F^n$, $\bar P_W$ has no nonzero polynomial subfunctors, while $I_W \in \F^{an}$.

We explain this last fact.  There is an identification
$$ I_{\Z/p}(V) = S^*(V)/(x^p-x)$$
and thus $I_W$ is visibly a quotient of the sum of the polynomial functors
$$ V \mapsto S^n(\Hom(W,V)).$$

\subsection{The polynomial filtration of a functor}
The inclusion $\F^n \hra \F$ has both a left adjoint $q_n$ and a right adjoint $p_n$.  Explicitly
$$ q_nF = \coker \{ \epsilon_F: \Delta^{n+1}F \otimes \bar P^{\otimes (n+1)} \ra F\}$$
and
$$ p_nF = \ker \{ \eta_F: F \ra \Delta^{n+1}F \otimes \bar I^{\otimes (n+1)} \}.$$

\begin{thm} \label{poly filt thm} {\bf (a)} \ $\displaystyle p_nI_W(V) = S^{* \leq n}(\Hom(W,V))/(x^p-x)$. \\

\noindent{\bf (b)} \ $\displaystyle \sum_{l+m=n} p_lF \otimes p_mG = p_n(F \otimes G)$.
\end{thm}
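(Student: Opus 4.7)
For part (a), the plan is to identify the subfunctor $G_n(V) := S^{*\leq n}(\Hom(W,V))/(x^p-x) \subseteq I_W(V)$ with $p_n I_W$ by checking its universal property. First, $G_n$ is polynomial of degree $\leq n$, being a quotient of $\bigoplus_{m=0}^n S^m(\Hom(W,-))$, and each $S^m(\Hom(W,-))$ is polynomial of degree $m$ because $\Hom(W,-)$ is a finite direct sum of copies of the identity functor. This already yields $G_n \subseteq p_n I_W$. For the reverse inclusion, any $\alpha: F \to I_W$ with $F \in \F^n$ corresponds by $\Hom_\F(F, I_W) = F(W)^\vee$ to a linear functional $a \in F(W)^\vee$, and on $V$ it is given by $\alpha_V(f)(\phi) = a(F(\phi)f)$ for $f \in F(V)$ and $\phi \in \Hom(V,W)$. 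The key fact is that, for $F \in \F^n$ and fixed $f$, the map $\phi \mapsto F(\phi) f$ is polynomial of degree $\leq n$ as a map $\Hom(V,W) \to F(W)$ of $\Z/p$-vector spaces; this is the Eilenberg--MacLane characterization of polynomial functors, equivalent to $\Delta^{n+1}F = 0$ by a short induction on $n$. Composing with $a$ yields a polynomial function of degree $\leq n$ on $\Hom(V,W)$ with values in $\Z/p$, which by definition lies in $G_n(V)$. Taking $\alpha = (p_n I_W \hra I_W)$ then gives $p_n I_W \subseteq G_n$.

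For part (b), the easy inclusion $\sum_{l+m=n} p_l F \otimes p_m G \subseteq p_n(F \otimes G)$ follows from the Leibniz-type rule
$$\Delta(A \otimes B) = (A \otimes \Delta B) \oplus (\Delta A \otimes B) \oplus (\Delta A \otimes \Delta B),$$
iterated to show $\Delta^{l+m+1}(p_l F \otimes p_m G) = 0$: each term in the expansion is $\Delta^i p_l F \otimes \Delta^j p_m G$ with $i+j \geq l+m+1$, forcing $i > l$ or $j > m$ and hence the term to vanish. For the reverse inclusion, I would induct on $n$. The base case is $p_0(F \otimes G) = (F \otimes G)(0) = F(0) \otimes G(0) = p_0 F \otimes p_0 G$. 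For the inductive step, set $H := p_n(F \otimes G)$ and consider $\Delta H$, which is polynomial of degree $\leq n-1$ inside $\Delta(F \otimes G) = F \otimes \Delta G \oplus \Delta F \otimes G \oplus \Delta F \otimes \Delta G$. Applying the inductive hypothesis to each of the three summands expresses $\Delta H$ as a sum of tensor products of $p_\bullet$-truncations at the $\Delta$-level; lifting back along the canonical splitting $(F \otimes G)(V \oplus \Z/p) = (F \otimes G)(V) \oplus \Delta(F \otimes G)(V)$ then gives $H \subseteq \sum_{l+m=n} p_l F \otimes p_m G$.

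The main obstacle is precisely the lifting step in part (b): one must reconstruct the decomposition of $H$ from that of $\Delta H$, keeping track of the constant parts that $\Delta$ annihilates. This is handled by the $n=0$ base case combined with a careful matching of the three Leibniz summands with the corresponding pieces of $\sum_{l+m=n} p_l F \otimes p_m G$. An alternative route, available once (a) is established, is to embed $F, G$ into products of functors of the form $I_{W_\alpha}, I_{W_\beta}$, use the identity $I_{W_1} \otimes I_{W_2} = I_{W_1 \oplus W_2}$ together with part (a) to verify (b) on these injective products directly, and then descend to general $F, G$ using $p_n(X \cap Y) = p_n X \cap p_n Y$ for subfunctors in a common ambient; here some care is required because the sum appearing in (b) is not generally direct, so identifying intersections with sums of tensor products demands a flat-tensor-product lemma of the form $(A \cap X) \otimes (B \cap Y) = (A \otimes B) \cap (X \otimes Y)$.
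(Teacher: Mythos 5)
Your argument for (a) is a genuine proof where the paper only cites the literature, and it is correct: the universal property $\Hom_{\F}(F,I_W)\simeq F(W)^{\vee}$ reduces the reverse inclusion to the fact that, when $\Delta^{n+1}F=0$, the function $\phi\mapsto F(\phi)f$ is polynomial of degree $\leq n$, which is the Eilenberg--MacLane characterization that the sources cited by the paper also rely on. This is a worthwhile expansion of what the paper leaves to a reference.

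For (b), your primary $\Delta$-induction has a real gap that I do not see how to close along the lines you sketch. Even if one established $\Delta H\subseteq\Delta K$, where $H=p_n(F\otimes G)$ and $K=\sum_{l+m=n}p_lF\otimes p_mG$, this would not give $H\subseteq K$: $\Delta$ kills every constant subfunctor, so distinct subfunctors of $F\otimes G$ can have the same image under $\Delta$, and the splitting $(F\otimes G)(V\oplus\Z/p)\cong(F\otimes G)(V)\oplus\Delta(F\otimes G)(V)$ gives no mechanism for recovering which elements of $(F\otimes G)(V)$ lie in $H(V)$ from knowledge of $\Delta H$. The base case $n=0$ controls the constant ambiguity once, not at every stage. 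Moreover, the inclusion $\Delta H\subseteq\Delta K$ is itself unclear, since $\Delta p_lF$ is in general only contained in, not equal to, $p_{l-1}\Delta F$. Your alternative route is exactly the paper's proof: embed $F$ and $G$ into sums of $I_W$'s, check from (a) (together with $I_V\otimes I_W\simeq I_{V\oplus W}$ and $S^{*\leq n}(A\oplus B)=\sum_{l+m=n}S^{*\leq l}(A)\otimes S^{*\leq m}(B)$) that the two filtrations agree on such sums, and then pass to $F\otimes G$ using left exactness of $p_n$. The subtlety you flag there is real and is in fact silently assumed by the paper: one needs the filtration $\sum_{l+m=n}p_l(-)\otimes p_m(-)$ to be left exact in each variable, equivalently that $(F\otimes G)\cap\sum_{l+m=n}p_lI_0\otimes p_mJ_0=\sum_{l+m=n}(F\cap p_lI_0)\otimes(G\cap p_mJ_0)$ inside $I_0\otimes J_0$. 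This does hold over a field: pointwise, choose a basis of $I_0(V)$ simultaneously adapted to the flag $p_{\bullet}I_0(V)$ and the subspace $F(V)$ (standard linear algebra for a flag plus one subspace), do the same for $J_0(V)$, and then every subspace in the identity is spanned by a subset of the product basis, so intersection distributes. You should state and prove that lemma and pursue the alternative route; the $\Delta$-induction should be abandoned.
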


Both of these statements are well known.  For (a), see \cite[Lemma 7.6.6]{hls1} or the discussion in \cite[\S 6]{genrepsurvey}.  Statement (b) is then a consequence as follows.  We are asserting that two natural filtrations on $F \otimes G$ agree, with the one filtration clearly including in the other.  From (a), one can visibly see that these filtrations agree when $F$ and $G$ are sums of $I_W$'s.  For the general case, one finds exact sequences
$$ 0 \ra F \ra I_0 \ra I_1$$
and
$$ 0 \ra G \ra J_0 \ra J_1,$$
where $I_0$, $I_1$, $J_0$, and $J_1$ are all sums of $I_W$'s.  Tensoring these sequences together yields an exact sequence
$$ 0 \ra F \otimes G \ra I_0 \otimes J_0 \ra I_1 \otimes J_0 \oplus I_0 \otimes J_1.$$
The two filtrations agree on the last two terms, and thus on $F\otimes G$.

\subsection{The equivalence $\U/\Nil \simeq \F^{an}$} \label{U/Nil subsection}

One has adjoint functors
$$ \U \begin{array}{c} l \\[-.08in] \longrightarrow \\[-.1in] \longleftarrow \\[-.1in] r
\end{array} \F$$
defined by $l(M)(V) = \Hom_{\U}(M,H^*(BV))^{\vee}$ and $r(F)^n = \Hom_{\F}(H_n,F)$.  As examples, $l(F(n)) = H_n$, and $r(I_W) = H^*(BW)$.

\begin{thm} \cite{hls1} The functor $l$ is exact, and $l$ and $r$ induce an equivalence of abelian categories
$$ \U/\Nil \begin{array}{c} l \\[-.08in] \longrightarrow \\[-.1in] \longleftarrow \\[-.1in] r
\end{array} \F^{an}.$$
\end{thm}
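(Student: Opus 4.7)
The plan is to exhibit $(l,r)$ as an adjoint pair with $l$ exact and $l(\Nil)=0$, then check that the unit and counit become isomorphisms after restriction to $\U/\Nil$ and $\F^{an}$ respectively. I would first verify the adjunction $\Hom_\F(lM,F) \cong \Hom_\U(M,rF)$ on the free unstable modules $M = F(n)$, where it reduces to the tautology $\Hom_\F(H_n,F) = (rF)^n$ via the computation $l(F(n)) = H_n$. The adjunction then extends to arbitrary $M$ since any $M$ is a cokernel of a map between sums of $F(n)$'s and both sides are exact in $M$.

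Exactness of $l$ is immediate because $H^*(BV)$ is injective in $\U$ (it is $H^*(BV) \otimes J(0)$ in the injectivity theorem of Section 3), so $\Hom_\U(-,H^*(BV))$ is exact. For the vanishing $l(\Nil)=0$, I would use that $H^*(BV)$ is $\Nil$-reduced, giving $\Hom_\U(\Sigma N, H^*(BV)) = 0$; exactness together with commutation with filtered colimits extends this to all of $\Nil$. By the adjunction, $r$ lands in the $\Nil$-closed modules, and one obtains a descended adjoint pair $(\bar l, \bar r)$ between $\U/\Nil$ and $\F$.

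To show $\bar l$ lands in $\F^{an}$, I would combine the Lannes identification $l(M)(V) = (T_V M)^0$, where $T_V$ is left adjoint to $H^*(BV) \otimes -$, with the structural identity $\Delta \circ l = l \circ \bar T$ that drops out of the splitting $T = \bar T \oplus \Id$ together with the iterate description $T_V = T^{\dim V}$. For a finitely generated $M$, let $n$ be larger than the degrees of any chosen generators; then $M$ is a subquotient of some $\bigoplus F(n_i) \in \U_n$ (using $F(n_i) \in \U_{n_i}$), so by Schwartz's theorem $\bar T^{n+1} M = 0$, whence $\Delta^{n+1} l(M) = l(\bar T^{n+1} M) = 0$, i.e., $l(M) \in \F^n$. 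The general case follows because $l$ commutes with filtered colimits (from $T_V$ being a left adjoint) and $\F^{an}$ is closed under filtered colimits.

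For the equivalence I would compute on injective cogenerators: Lannes' evaluation of $T_V$ on $H^*(BW)$ yields $l(H^*(BW)) = I_W$, and Yoneda gives $r(I_W)^n = \Hom_\F(H_n, I_W) = H^n(BW)$, hence $r(I_W) = H^*(BW)$. Every $\Nil$-closed unstable module embeds in a product of modules $H^*(BV)$ (the $\Nil$-closed $\U$-injectives), and every $F \in \F^{an}$ embeds in a product of $I_W$'s, so the unit $M \to rl(M)$ and counit $lr(F) \to F$ are isomorphisms on these cogenerators, and exactness together with left-exactness of products forces them to be isomorphisms on all objects. The main obstacle is the analyticity and essential surjectivity steps, which both rest on the explicit Lannes calculation of $T_V H^*(BW)$ as indexed by $\Hom(V,W)$---this is the substantive input, while the rest of the argument is formal manipulation of adjunctions.
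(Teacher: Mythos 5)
The paper cites this equivalence from \cite{hls1} and gives no proof of its own, so there is no internal argument to compare against; I assess your sketch on its merits.

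Your outline follows the standard Henn--Lannes--Schwartz strategy, and most of the formal steps (the adjunction, exactness of $l$, vanishing on $\Nil$, descent to the quotient) are sound. There is, however, one genuine circularity. To show $l(M)$ is polynomial for finitely generated $M$ you invoke \thmref{schwartz thm} (Schwartz's characterization of $\U_n$) to conclude $\bar T^{n+1}M = 0$. But the paper's proof of \thmref{schwartz thm} in \secref{schwartz thm section} explicitly depends on the equivalence $\U/\Nil \simeq \F^{an}$ --- via \thmref{finite gen thm}(c), as the closing remark of that section acknowledges --- so it cannot be used here. The detour through the Krull filtration is unnecessary anyway: the elementary computation $\bar T F(m) = F(m-1)$ (already used in \secref{krull mod section}(b)) shows directly that $\bar T^{n+1}$ annihilates $\bigoplus_i F(n_i)$ whenever all $n_i \le n$, and exactness of $\bar T$ then kills the subquotient $M$. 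Substituting this removes the circularity.

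The remaining soft spot is the final paragraph. Extending the unit/counit isomorphisms from cogenerators to all objects requires knowing that a $\Nil$-closed module admits an injective copresentation $0 \to M \to I_0 \to I_1$ with $I_0, I_1$ direct \emph{sums} of $H^*(BV)$'s; this in turn rests on the Lannes--Zarati classification of $\U$-injectives and on the fact that an injective envelope of a $\Nil$-reduced module is again $\Nil$-reduced. Your appeal to products and ``left-exactness of products'' is not the right framing: products in $\U$ are delicate and $l$ need not commute with them, whereas direct sums together with exactness of $l$ and left-exactness of $r$ do the job cleanly. You do correctly isolate Lannes' computation of $T_V H^*(BW)$ (equivalently $\Hom_{\U}(H^*(BW),H^*(BV))$) as the substantive nonformal input.
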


It follows that $M \ra rlM$ is $\Nil$--closure.

\begin{prop} \cite{hls1} There are natural isomorphisms
$$ l(M \otimes N) \simeq l(M) \otimes l(N) \text{ and } r(F \otimes G) \simeq r(F) \otimes r(G).$$
\end{prop}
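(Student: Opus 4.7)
The plan is to prove the first isomorphism directly using Lannes' $T$-functor, and then derive the second by transporting the monoidal structure along the equivalence $\U/\Nil \simeq \F^{an}$.

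For $l(M \otimes N) \simeq l(M) \otimes l(N)$: evaluate at $V$ and apply the Lannes adjunction to rewrite
$$\Hom_\U(M, H^*(BV)) \cong \Hom_\U(T_V M, \Z/p).$$
Since any $\A$-linear map to $\Z/p$ is determined by its degree-zero component, with no further constraint (the unstable axiom forces $Sq^k$ on a degree-zero class to vanish for $k>0$), one has $\Hom_\U(X, \Z/p) \cong (X^0)^{\vee}$, and so $l(M)(V)$ is naturally identified with $(T_V M)^0$ once dualizations are read as graded duals. With this identification in hand, Lannes' theorem that $T_V$ commutes with tensor products, together with the elementary $(X \otimes Y)^0 = X^0 \otimes Y^0$, yields
$$l(M \otimes N)(V) \cong (T_V M)^0 \otimes (T_V N)^0 \cong l(M)(V) \otimes l(N)(V),$$
naturally in $V$ and in $M, N$.

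For $r(F \otimes G) \simeq r(F) \otimes r(G)$: since $H_n$ is polynomial, hence analytic, and $r(F)^n = \Hom_{\F}(H_n, F)$, the functor $r$ depends only on the analytic subfunctor of its input, reducing us to the case $F, G \in \F^{an}$. On $\F^{an}$ the pair $(l, r)$ is a quasi-inverse equivalence $\U/\Nil \simeq \F^{an}$, so having shown $l$ to be symmetric monoidal, its quasi-inverse $r$ inherits a matching symmetric monoidal structure. Concretely, $r(F \otimes G) \simeq r(F) \otimes r(G)$ holds in $\U/\Nil$, i.e., up to $\Nil$-isomorphism in $\U$; and since $r$ lands in $\Nil$-closed modules the isomorphism promotes to one in $\U$.

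The main obstacle is the dualization and finite-dimensionality bookkeeping in identifying $l(M)(V)$ with $(T_V M)^0$: for general $M$, $\Hom_\U(M, H^*(BV))$ need not be finite-dimensional, so a naive double dual is strictly larger than the object actually appearing in $\F^{an}$. The standard fix is to reduce to finitely generated $M$ by expressing $l$ as a filtered colimit of its values on finitely generated submodules, on which all relevant graded pieces are finite-dimensional and dualization is clean, and then to verify that $l$ and both tensor products are compatible with such colimits.
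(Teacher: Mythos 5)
The paper gives no proof here, simply citing [HLS1], so there is nothing to compare against directly; but the paper does prove the closely analogous \thmref{poly filt thm}(b), and that proof is instructive for diagnosing your second half.

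Your argument for $l(M\otimes N)\simeq l(M)\otimes l(N)$ is essentially the standard one and is sound. Identifying $l(M)(V)$ with $(T_V M)^0$ via the Lannes adjunction and $\Hom_\U(X,\Z/p)\cong (X^0)^\vee$, then invoking $T_V(M\otimes N)\cong T_VM\otimes T_VN$ and $(X\otimes Y)^0 = X^0\otimes Y^0$, is exactly right. You are also right to flag the double-dual issue: for finitely generated $M$, $T_VM$ is finitely generated (Lannes), so $(T_VM)^0$ is finite-dimensional and $\Hom_\U(M,H^*(BV))^{\vee}=((T_VM)^0)^{\vee\vee}=(T_VM)^0$; the extension to general $M$ works because both $l$ (being a left adjoint) and $(T_V(-))^0$ commute with filtered colimits.

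Your argument for $r(F\otimes G)\simeq r(F)\otimes r(G)$ has a genuine gap. Knowing the isomorphism holds in $\U/\Nil$ only tells you that the natural comparison map $r(F)\otimes r(G)\ra r(F\otimes G)$ is a $\Nil$-isomorphism; since the target is $\Nil$-closed, this identifies $r(F\otimes G)$ with the $\Nil$-\emph{closure} of $r(F)\otimes r(G)$, not with $r(F)\otimes r(G)$ itself. To finish you need $r(F)\otimes r(G)$ to be $\Nil$-closed --- but, given that $l$ is monoidal, ``the tensor product of $\Nil$-closed modules is $\Nil$-closed'' is \emph{equivalent} to the statement being proved, so the phrase ``since $r$ lands in $\Nil$-closed modules the isomorphism promotes'' is circular. (What is easy is that the tensor product of $\Nil$-\emph{reduced} modules is $\Nil$-reduced, since $\Phi(M\otimes N)=\Phi M\otimes\Phi N$ and a tensor product of monomorphisms over a field is a monomorphism; this only shows the comparison map is monic.) The standard way to close the gap, and the one the paper itself employs for \thmref{poly filt thm}(b), is to use left-exactness of $r$: choose exact sequences $0\ra F\ra I_0\ra I_1$ and $0\ra G\ra J_0\ra J_1$ with $I_i,J_j$ sums of $I_W$'s, note that $r(I_V)\otimes r(I_W)=H^*(BV)\otimes H^*(BW)=H^*(B(V\oplus W))=r(I_{V\oplus W})=r(I_V\otimes I_W)$, tensor the resolutions (exact over a field), apply $r$, and compare kernels.
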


\begin{prop}  There are natural isomorphisms
$$ l (\bar T M) \simeq \Delta l(M) \text{ and } r(\Delta F) \simeq \bar T r(F).$$
\end{prop}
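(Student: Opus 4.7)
The plan is to prove the two isomorphisms in sequence, starting with $l(\bar{T}M) \simeq \Delta l(M)$, which I would attack directly via the defining formulas, and then deducing $r(\Delta F) \simeq \bar{T}r(F)$ from it using the equivalence $\U/\Nil \simeq \F^{an}$.

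For the first isomorphism, I would simply unwind the definitions on a test vector space $V$. By definition $l(\bar{T}M)(V) = \Hom_{\U}(\bar{T}M, H^*(BV))^{\vee}$, and the adjunction defining $\bar{T}$ rewrites this as $\Hom_{\U}(M, \tilde{H}^*(B\Z/p) \otimes H^*(BV))^{\vee}$. The key input is the K\"unneth splitting
\[ H^*(B(V \oplus \Z/p)) \;\simeq\; H^*(BV) \otimes H^*(B\Z/p) \;\simeq\; H^*(BV) \;\oplus\; \bigl(\tilde{H}^*(B\Z/p) \otimes H^*(BV)\bigr), \]
which is a splitting in $\U$ compatible with the maps induced by the inclusion $V \hookrightarrow V \oplus \Z/p$ and the projection $V \oplus \Z/p \to V$. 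Applying $\Hom_{\U}(M,-)$ to this split decomposition and dualizing identifies the $V$-value of $l(\bar T M)$ with the cokernel of $lM(V) \to lM(V \oplus \Z/p)$, which is exactly $\Delta lM(V)$. Naturality in both $V$ and $M$ is clear from the construction.

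For the second isomorphism, I would use that $r(F)$ is $\Nil$--closed and that $\bar{T}$ preserves $\Nil$--closed modules (recorded in the last theorem of \secref{U section}), so $\bar{T}r(F)$ is $\Nil$--closed; similarly $r(\Delta F)$ is $\Nil$--closed. Under the equivalence $\U/\Nil \simeq \F^{an}$, two $\Nil$--closed modules are isomorphic precisely when their images under $l$ are. Applying $l$ to both candidates and using the first isomorphism together with the counit $lr \simeq \mathrm{Id}$ on $\F^{an}$ gives
\[ l(\bar{T}r(F)) \simeq \Delta l(r(F)) \simeq \Delta F \simeq l(r(\Delta F)), \]
so they agree, and pulling back through the equivalence yields the natural iso $\bar{T}r(F) \simeq r(\Delta F)$.

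The only nontrivial piece is the first step: one must check that the K\"unneth splitting of $H^*(B(V \oplus \Z/p))$ is genuinely a direct sum decomposition of unstable modules, natural in $V$ with the summand projection matching the map $F(V) \hookrightarrow F(V \oplus \Z/p)$ defining $\Delta$. This is standard for elementary abelian $p$--groups, and once it is in hand the rest of the argument is formal manipulation of adjunctions together with the already--cited preservation properties of $\bar{T}$.
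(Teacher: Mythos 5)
Your proof is correct and fills in exactly the details the paper leaves to the reader: the paper's own proof is the one-line remark that the first isomorphism ``is easily checked'' (your unwinding of $l$ via the K\"unneth splitting of $H^*(B(V\oplus\Z/p))$ is the intended check) and that the second ``formally follows, once one knows that $\bar T$ preserves $\Nil$--closed modules,'' which is precisely the transport-through-$l$ argument you give. No divergence from the paper's route.
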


The first of these is easily checked, and the second formally follows, once one knows that $\bar T$ preserves $\Nil$--closed modules.

\section{Proof of \thmref{schwartz thm}} \label{schwartz thm section}

In this section, we prove Schwartz' characterization of $\U_n$:
$$\U_n = \U_n^T,$$
where $\U_n^T$ is the full subcategory of $\U$ with objects $\{ M \in \U \ | \ \bT^{n+1}M = 0 \}$.

We prove this by induction on $n$.  As our inductive step needs a lemma (\corref{deg 0 cor} below) that is at the heart of the $n=0$ case, we begin with the proof that $\U_0 = \U_0^T$ roughly following the proofs in \cite{ls2,s2}.

It is easy to characterize modules in $\U_0$, noting that any module has its decreasing skeletal filtration, with subquotient modules concentrated in one degree.

\begin{lem} \  Let $M$ be an unstable module. $M$ is simple if and only if it is isomorphic to $\Sigma^s \Z/p$ for some $s$.  $M$ is finite in the categorical sense if and only if it is finite.  $M$ is locally finite if and only if $\A\cdot x \subseteq M$ is finite for all $x \in M$.
\end{lem}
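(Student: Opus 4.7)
The plan is to exploit the skeletal filtration $M = M^{\geq 0} \supseteq M^{\geq 1} \supseteq M^{\geq 2} \supseteq \dots$, where $M^{\geq s} = \bigoplus_{k \geq s} M^k$; this is automatically an $\A$-submodule because $\A$ is concentrated in non-negative degrees. The first thing I would observe is that each subquotient $M^{\geq s}/M^{\geq s+1}$ is concentrated in the single degree $s$, and that any unstable module concentrated in degree $s$ must have trivial action by every positive-degree Steenrod operation (such an operation would raise degree out of the module's support). Consequently such a subquotient is canonically a direct sum of copies of $\Sigma^s \Z/p$ as an unstable module.

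Given this observation, the three characterizations follow almost formally. For the first, if $M$ is simple and $s$ is its lowest nonzero degree, then $M^{\geq s+1}$ is a proper submodule of $M^{\geq s} = M$, hence vanishes; so $M$ is concentrated in degree $s$ and is a sum of copies of $\Sigma^s \Z/p$, and simplicity forces exactly one copy. The converse, that $\Sigma^s \Z/p$ is simple, is immediate. For the second, categorical finiteness means admitting a finite composition series, and since the simples are exactly the $\Sigma^s \Z/p$, any such $M$ is finite-dimensional over $\Z/p$; conversely, for a finite-dimensional $M$ the skeletal filtration has only finitely many distinct terms, and each subquotient, being a finite-dimensional $\Z/p$-vector space in one degree, has a further finite filtration by $\Sigma^s \Z/p$'s, which yields a finite composition series for $M$. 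For the third, $\A \cdot x$ is the smallest submodule of $M$ containing $x$, so every $x$ lies in a finite subobject of $M$ if and only if $\A \cdot x$ itself is finite for every $x$, which is what locally finite means.

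I anticipate no real obstacle here: the only substantive content is the recognition that an unstable module concentrated in a single degree must have trivial positive-degree Steenrod action, which is a direct consequence of the grading alone and does not even invoke the unstable condition. Everything else is bookkeeping around this observation.
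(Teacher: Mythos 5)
Your proof is correct and follows exactly the approach the paper indicates in its one-sentence sketch preceding the lemma, namely exploiting the decreasing skeletal filtration $M \supseteq M^{\geq 1} \supseteq M^{\geq 2} \supseteq \dots$ whose subquotients are concentrated in a single degree and hence are sums of copies of $\Sigma^s\Z/p$. The observation you emphasize, that a module concentrated in a single degree has trivial positive-degree Steenrod action purely for grading reasons, is indeed the only substantive point, and the bookkeeping around it is done correctly.
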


\begin{prop}  If $M$ is locally finite, then $\bar T M = 0$.  Thus $\U_0 \subseteq \U_0^T$.
\end{prop}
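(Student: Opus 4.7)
The plan is to reduce the statement to the single computation $\bar T\, \Z/p = 0$, using the formal properties of $\bar T$ already collected in the preceding section (exactness, commutation with $\Sigma$, and the defining adjunction), plus the classification of simple and locally finite objects from the lemma just stated.

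First, since $\bar T$ is defined as a left adjoint it commutes with all colimits, in particular with filtered colimits. By the lemma, $M$ locally finite means that $M$ is the union (hence the filtered colimit) of its finite submodules $M_\alpha$, so $\bar T M = \colim_\alpha \bar T M_\alpha$. It therefore suffices to prove $\bar T M = 0$ when $M$ is finite.

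Second, any finite $M$ admits a finite filtration whose composition factors are simple, and by the lemma these simple objects are of the form $\Sigma^s \Z/p$. Using the exactness of $\bar T$, an induction on the length of such a filtration reduces the problem to the case $M = \Sigma^s \Z/p$. Because $\bar T$ commutes with $\Sigma^s$, we have $\bar T \Sigma^s \Z/p = \Sigma^s \bar T \Z/p$, so we are reduced to showing $\bar T \Z/p = 0$.

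Finally, the defining adjunction gives, for every $N \in \U$,
$$\Hom_{\U}(\bar T \Z/p, N) \;\simeq\; \Hom_{\U}(\Z/p, \tilde H^*(B\Z/p) \otimes N).$$
A homomorphism out of $\Z/p$ in $\U$ is determined by the image of its generator in degree $0$, so $\Hom_{\U}(\Z/p, L) = L^0$ for any $L \in \U$. Applied to $L = \tilde H^*(B\Z/p) \otimes N$, this group is $\bigoplus_{i+j=0} \tilde H^i(B\Z/p) \otimes N^j$, which vanishes since $\tilde H^i(B\Z/p) = 0$ for $i \leq 0$ and $N^j = 0$ for $j < 0$. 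Thus $\Hom_{\U}(\bar T \Z/p, N) = 0$ for all $N$, forcing $\bar T \Z/p = 0$. There is no real obstacle here: the only mild point of care is ensuring that the filtered colimit reduction and the composition-series reduction both use the correct one of exactness and cocontinuity of $\bar T$, both of which are in hand.
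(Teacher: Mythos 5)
Your argument is correct and follows the same route as the paper: use the lemma's characterization of locally finite, finite, and simple objects together with the exactness, cocontinuity, and $\Sigma$-equivariance of $\bar T$ to reduce to the calculation $\bar T\,\Z/p = 0$. The only difference is that you spell out the adjunction argument for $\bar T\,\Z/p = 0$, which the paper simply cites as a known calculation.
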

\begin{proof}  As $\bar T$ is exact and commutes with suspensions and directed colimits, this follows from the lemma and the calculation that $\bar T \Z/p = 0$.
\end{proof}
\begin{prop} If $\bar T M = 0$, then $M$ is locally finite. Thus $\U_0^T \subseteq \U_0$.
\end{prop}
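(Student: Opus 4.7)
The plan is to reduce to the case of a finitely generated $\Nil$--reduced module and then invoke the equivalence $\U/\Nil \simeq \F^{an}$ of \secref{U/Nil subsection} to identify $M$ as a finite module concentrated in degree $0$.

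Since $\bar T$ is exact, every submodule $N \subseteq M$ satisfies $\bar T N \hookrightarrow \bar T M = 0$. As $M$ is the directed union of its finitely generated (in fact cyclic) submodules, it suffices to prove the claim under the assumption that $M$ itself is finitely generated. In this case, the nilpotent filtration $M \supset nil_1 M \supset nil_2 M \supset \cdots$ has finite length by \thmref{finite gen thm}(b), with subquotients $\Sigma^s R_s M$, each $R_s M$ being $\Nil$--reduced. Because $\bar T$ commutes with $R_s$, the hypothesis forces $\bar T R_s M = R_s \bar T M = 0$. Since finiteness is preserved under suspensions and extensions, it suffices to show each $R_s M$ is finite. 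We may therefore assume $M$ is finitely generated, $\Nil$--reduced, and satisfies $\bar T M = 0$.

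Apply the exact functor $l: \U \ra \F^{an}$. Since $l(\bar T M) \simeq \Delta l(M)$, we have $\Delta l(M) = 0$, so $l(M)$ is polynomial of degree zero; under the identification $\F^0 \simeq \Z/p$--modules this means $l(M)$ is a constant functor with value some vector space $W$, which is finite-dimensional by \thmref{finite gen thm}(c). A direct computation of the right adjoint $r$ on the constant functor $W$ gives $r(W)^n = \Hom_{\F}(H_n, W) = W$ for $n = 0$ and $0$ for $n > 0$: the vanishing uses $H_n(0) = 0$ together with naturality applied to the zero endomorphism of $V$, which factors through $0$. Because $M$ is $\Nil$--reduced, the adjunction unit produces a monomorphism $M \hookrightarrow rl(M) = W$, placing $M$ inside a finite module concentrated in degree $0$, so $M$ is finite.

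The main subtlety lies in the reduction to the $\Nil$--reduced case, which hinges on the finite length of the nilpotent filtration for finitely generated modules and on the commutation $\bar T R_s = R_s \bar T$; once these are in hand, the rest of the argument is formal, using only that constant functors in $\F^{an}$ correspond, via $r$, to unstable modules concentrated in degree $0$.
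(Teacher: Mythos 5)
Your argument is correct, but it takes a genuinely different route from the paper's. The paper's sketch stays entirely inside $\U$: a finitely generated $M$ embeds in an injective of the form $\bigoplus_{i} H^*(BV_i)\otimes J(n_i)$, and the hypothesis $\bT M=0$ (via the $\bT$--adjunction and the splitting $H^*(BV)=\Z/p\oplus\tilde H^*(BV)$) kills every component of the embedding landing in $\tilde H^*(BV_i)\otimes J(n_i)$, so $M$ lands in the finite module $\bigoplus_i J(n_i)$. You instead reduce via the finite-length nilpotent filtration (and the commutation $\bT R_s=R_s\bT$) to the case $M$ finitely generated and $\Nil$--reduced, pass to $\F^{an}$ using $l(\bT M)\simeq\Delta l(M)$, observe $l(M)$ is then a finite constant functor, compute $r$ on a constant functor, and embed $M\hra rl(M)$ using $\Nil$--reducedness. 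Your approach in effect proves \corref{deg 0 cor} (that $\Nil$--reduced $M$ with $\bT M=0$ is concentrated in degree $0$) directly and deduces the proposition from it, which is the reverse of the paper's logical order; it uses more machinery (\thmref{finite gen thm}(b), the HLS equivalence, \thmref{finite gen thm}(c)) where the paper uses the classification of $\U$--injectives, but it is self-contained and sound. One small point worth spelling out if you keep this route: the passage from ``$l(M)$ is polynomial of degree $0$'' to ``$l(M)$ is constant'' is because $\Delta F=0$ forces the canonical inclusion $F(V)\to F(V\oplus\Z/p)$ to be an isomorphism, whence $F(0)\to F(V)$ is an isomorphism for all $V$, and these isomorphisms are natural.
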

\begin{proof}[Sketch proof]  It suffices to show this when $M$ is finitely generated.  Such an $M$ embeds in an injective module of the form $\displaystyle \bigoplus_{i=1}^r H^*(BV_i) \otimes J(n_i)$.

Meanwhile, $\bar T M = 0$ implies that $\Hom_{\U}(M, \tilde H^*(BV) \otimes J(n)) = 0$ for all $V$ and $n$.  Thus $M$ will embed in $\displaystyle \bigoplus_{i=1}^r H^0(BV_i) \otimes J(n_i) = \bigoplus_{i=1}^r J(n_i)$ and so is finite.
\end{proof}

\begin{cor} \label{deg 0 cor} If $M$ is $\Nil$--reduced and $\bar T M = 0$, then $M$ is concentrated in degree 0.
\end{cor}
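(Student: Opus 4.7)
The plan is to combine the preceding proposition with the structural facts about locally finite modules recalled in Section~3.

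First, the preceding proposition already gives that $\bar{T}M = 0$ implies $M$ is locally finite. So I only need to show that a $\Nil$--reduced locally finite module is concentrated in degree $0$.

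For the second step, since $M$ is $\Nil$--reduced, by definition $\Hom_{\U}(N, M) = 0$ for all $N \in \Nil$. Applied to the inclusion $nil_1 M \hra M$ (with $nil_1 M \in \Nil$), this forces $nil_1 M = 0$, and hence $nil_s M = 0$ for all $s \geq 1$. On the other hand, since $M$ is locally finite, the proposition in Section~3 identifies the nilpotent filtration with the skeletal filtration, giving $nil_s M / nil_{s+1} M = \Sigma^s M^s$. Combining these two observations forces $M^s = 0$ for all $s \geq 1$, so $M$ is concentrated in degree $0$.

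There is essentially no obstacle here: the corollary is a direct consequence of stringing together the previous proposition with the structure of locally finite modules. The corollary is singled out because it is the precise form needed to drive the inductive step in the proof of $\U_n = \U_n^T$ that follows.
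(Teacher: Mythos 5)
Your proof is correct and matches the paper's intent: the corollary is placed immediately after the two propositions and is meant to be read off by combining them, exactly as you do ($\bar{T}M=0$ gives locally finite, and then $\Nil$-reducedness kills the nilpotent filtration, which for locally finite modules is the skeletal filtration). No gaps.
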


This corollary is used in the last step of the proof of the following lemma.

\begin{lem}  \label{omega lemma} If $M$ is $\Nil$--reduced, then
$$\bar T^{n+1}M = 0 \Leftrightarrow \bar T^n\Omega M = 0.$$
\end{lem}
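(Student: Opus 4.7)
The plan is to show first that $\bar T$ commutes with $\Omega$ on $\Nil$--reduced modules, iterate this, and then deduce the equivalence by reducing it to a statement about the vanishing of $\bar T$ on a $\Nil$--reduced module being equivalent to concentration in degree $0$ (which is what \corref{deg 0 cor} and its easy converse provide).

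First I would establish the key identity: for $M$ $\Nil$--reduced, $\bar T \Omega M \simeq \Omega \bar T M$. Since $M$ is $\Nil$--reduced, $\Omega^1 M = 0$ and the four term sequence recalled in \S\ref{U section} collapses to a short exact sequence
$$ 0 \ra \Phi M \xra{\lambda} M \ra \Sigma \Omega M \ra 0.$$
Applying the exact functor $\bar T$ and using that $\bar T$ commutes with both $\Phi$ and $\Sigma$ yields
$$ 0 \ra \Phi \bar T M \ra \bar T M \ra \Sigma \bar T \Omega M \ra 0.$$
Since $\bar T$ preserves $\Nil$--reduced modules, the analogous short exact sequence for $\bar T M$ reads
$$ 0 \ra \Phi \bar T M \xra{\lambda} \bar T M \ra \Sigma \Omega \bar T M \ra 0,$$
and comparison of cokernels identifies $\Sigma \bar T \Omega M \simeq \Sigma \Omega \bar T M$, hence $\bar T \Omega M \simeq \Omega \bar T M$.

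Iterating this identity (legitimate because $\bar T^k M$ is again $\Nil$--reduced for every $k$) gives
$$ \bar T^n \Omega M \simeq \Omega \bar T^n M.$$
Thus $\bar T^n \Omega M = 0$ if and only if $\Omega \bar T^n M = 0$. Because $\bar T^n M$ is $\Nil$--reduced, $\Omega \bar T^n M = 0$ if and only if $\bar T^n M$ is concentrated in degree $0$: one direction is immediate from dimensions, and the other follows from the short exact sequence above applied to $\bar T^n M$, which forces $\Phi \bar T^n M \cong \bar T^n M$ and hence vanishing in all positive degrees.

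Finally, it remains to observe that for the $\Nil$--reduced module $\bar T^n M$, vanishing of $\bar T$ on it is equivalent to its being concentrated in degree $0$: \corref{deg 0 cor} gives one direction, and the converse is immediate since a module concentrated in degree $0$ is a sum of copies of $\Z/p$, hence locally finite, hence killed by $\bar T$. Chaining these equivalences yields $\bar T^{n+1} M = 0 \Leftrightarrow \bar T^n \Omega M = 0$. The only step requiring genuine input is the commutation $\bar T \Omega \simeq \Omega \bar T$ on $\Nil$--reduced modules; everything else is formal manipulation of exact sequences and the previously established \corref{deg 0 cor}.
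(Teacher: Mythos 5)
Your proof is correct and follows essentially the same route as the paper: both start from the short exact sequence $0 \ra \Phi M \ra M \ra \Sigma \Omega M \ra 0$ valid for $\Nil$--reduced $M$, apply $\bar T^n$ using exactness and commutation with $\Phi$ and $\Sigma$, and close the loop via \corref{deg 0 cor}. The one cosmetic difference is that you explicitly extract the intermediate identity $\bar T\Omega \simeq \Omega\bar T$ on $\Nil$--reduced modules before iterating, whereas the paper passes directly from $\bar T^n\Omega M = 0$ to $\Phi\bar T^n M \simeq \bar T^n M$ without naming that commutation.
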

\begin{proof}  If $M$ is $\Nil$--reduced, there is a short exact sequence
$$ 0 \ra \Phi M \ra M \ra \Sigma \Omega M \ra 0.$$
Since $\bar T^n$ is exact and commutes with $\Phi$ and $\Sigma$, one deduces that
\begin{equation*}
\begin{split}
\bar T^n \Omega M = 0 &
\Leftrightarrow\Phi \bar T^n M \simeq \bar T^n M \\
  & \Leftrightarrow \bar T^n M \text{ is concentrated in degree 0} \\
  & \Leftrightarrow \bar T^{n+1} M = 0.
\end{split}
\end{equation*}
\end{proof}

Armed with this lemma, we now give the inductive step of the proof that $\U_n = \U_n^T$.  So assume by induction that $\U_{n-1} = \U_{n-1}^T$.

\begin{proof}[Proof that $\U_n \subseteq \U_n^T$]  A simple object in $\U/\U_{n-1}$ can be represented by $\Sigma^s M$, where $M$ is reduced.  We show that then $\Sigma^s M \in \U_n^T$.

Consider the exact sequence
$$ 0 \ra \Sigma^s \Phi M \ra \Sigma^s M \ra \Sigma^{s+1} \Omega M \ra 0.$$
As $\Sigma^s M$ is simple in $\U/\U_{n-1}$ either $\Sigma^s \Phi M \in \U_{n-1}$ or $\Sigma^{s+1} \Omega M \in \U_{n-1}$.

In this first case, we have
\begin{equation*}
\begin{split}
\Sigma^s \Phi M \in \U_{n-1} & \Rightarrow \bar T^n \Sigma^s \Phi M = 0  \Rightarrow \Phi \bar T^n M = 0 \\
& \Rightarrow \bar T^n M = 0  \Rightarrow \bar T^n \Sigma^s M = 0 \\
& \Rightarrow \Sigma^s M \in \U_{n-1}.
\end{split}
\end{equation*}
But this contradicts that $\Sigma^sM$ is nonzero in $\U/\U_{n-1}$.

Thus $\Sigma^{s+1} \Omega M \in \U_{n-1}$.  But then
\begin{equation*}
\begin{split}
\Sigma^{s+1} \Omega M \in \U_{n-1} & \Rightarrow \bar T^n \Sigma^{s+1} \Omega M = 0  \Rightarrow \bar T^n \Omega M = 0 \\
& \Rightarrow \bar T^{n+1} M = 0  \Rightarrow \bar T^{n+1} \Sigma^s M = 0 \\
& \Rightarrow \Sigma^s M \in \U_{n}^T.
\end{split}
\end{equation*}
\end{proof}

\begin{proof}[Proof that $\U_n^T \subseteq \U_n$]  Suppose that $\Sigma^s M \in \U_n^T$, with $M$ finitely generated, $\Nil$--reduced, and representing a simple object in $\U/\Nil$.  We show that then $\Sigma^s M$ is either in $\U_{n-1}$ or represents a simple object in $\U/\U_{n-1}$, and is thus in $\U_n$.

Using \lemref{omega lemma}, $\Sigma^s M \in \U_n^T$ implies that $\Omega M \in \U_{n-1}^T$, so that $\Phi M \ra M$ is a $\U_{n-1}^T$--isomorphism.  It follows that, for all $k$, $\Sigma^s \Phi^k M \ra \Sigma^s M$ will be a $\U_{n-1}^T$--isomorphism (and thus a $\U_{n-1}$--isomorphism).

Any nonzero sub-object of $\Sigma^s M$ has the form $\Sigma^s N$ with $0 \neq N < M$.  As $M$ is $\Nil$--reduced and simple in $\U/\Nil$, $M/N \in \Nil$.  By \lemref{phi lemma}, there exists $k>0$ such that $\Phi^k M \subset N$.  Since $\Sigma^s M/\Sigma^s \Phi^kM \in \U_{n-1}$, we deduce that $\Sigma^s M/\Sigma^s N \in \U_{n-1}$.
\end{proof}

\begin{rem}  We comment on how the proof of \thmref{schwartz thm} differs from the one outlined in \cite{s2}.

The proof in \cite{s2} makes use of the theorem that $\U/\Nil \simeq \F^{an}$ together with substantial analysis of how the polynomial filtration of $\F^{an}$ is reflected in the category of reduced modules.

Our proof instead uses \thmref{finite gen thm}(c), which says that finitely generated modules represent finite objects in $\U/\Nil$.  The proof of this also uses that $\U/\Nil \simeq \F^{an}$, but replaces the analysis of the polynomial filtration by the observation that the functor in $\F$ given by $V \mapsto H_n(BV)$ is a finite functor, which has a very elementary proof \cite[Prop. 4.10]{genrep1}.
\end{rem}

\section{Properties of the Krull filtration of a module} \label{krull mod section}

For simplicity, we write $\bar H$ denote $\tilde H^*(B\Z/p)$.  Recall that $M \in \U_n$ if and only if $\bar T^{n+1}M = 0$, and $\bar T^{n+1}$ is both exact and left adjoint to the functor $M \mapsto M \otimes \bar H^{\otimes n+1}$. It follows formally that
\begin{equation*} \label{kn defn}
k_nM = \ker\{ \eta_M: M \ra \bar T^{n+1} M \otimes \bar H^{\otimes n+1}\},
\end{equation*}
where $\eta_M$ is the unit of the adjunction.

We run through proofs of various properties of $k_nM$. \\

\noindent{\bf (a)} \  $k_n$ is left exact and commutes with filtered colimits.
\begin{proof}  This is immediate.
\end{proof}

\noindent{\bf (b)} \  $\displaystyle \bigcup_{n=0}^{\infty} k_nM = M$.
\begin{proof}  One easily computes that $\bar T F(n) = F(n-1)$, so that $F(n) \in \U_n$.  As every unstable module is a quotient of a sum of $F(n)$'s, the claim follows.
\end{proof}

\noindent{\bf (c)} \  $k_n(N \otimes M) \simeq N \otimes k_nM$ if $N \in \U_0$. In particular, $k_n\Sigma^s M = \Sigma^s k_nM$.
\begin{proof}  As $\bar TN=0$,
$$\eta_{N \otimes M}: N \otimes M \ra \bar T^{n+1}(N \otimes M) \otimes \bar H^{\otimes n+1}$$ identifies with
$$ N \otimes \eta_M: N \otimes M \ra N \otimes \bar T^{n+1}M \otimes \bar H^{\otimes n+1}.$$
\end{proof}

\noindent{\bf (d)} \  $k_n\Phi M \simeq \Phi k_nM$.
\begin{proof}  As $\Phi$ commutes with tensor products and $\bar T$, one has a commutative diagram
\begin{equation*}
\xymatrix{
\Phi M \ar@{=}[d] \ar[r]^-{\Phi(\eta_M)} & \Phi (\bar T^{n+1} M \otimes \bar  H^{\otimes n+1})\ar[r]^-{\sim} & \bar T^{n+1} \Phi M \otimes (\Phi \bar  H)^{\otimes n+1} \ar[d]  \\
\Phi M \ar[rr]^-{\eta_{\Phi M}} && \bar T^{n+1} \Phi M \otimes \bar H^{\otimes n+1} }
\end{equation*}
As $\Phi \bar H \ra \bar H$ is monic, the kernels of the two horizontal maps are equal. The kernel of the bottom map is $k_n\Phi M$, and, since $\Phi$ is exact, the kernel of the top map is $\Phi k_n M$.
\end{proof}

\noindent{\bf (e)} \  $k_n nil_s M \simeq nil_s k_nM$.
\begin{proof}  One easily checks that $k_n nil_s M \simeq k_nM \cap nil_s M \simeq nil_s k_nM$.
\end{proof}

\noindent{\bf (f)} \  If $M = r(F)$, then $k_n M = r(p_n F)$.  Thus if $M$ is $\Nil$--closed, so is $k_nM$.

\begin{proof}  Applying the left exact functor $r$ to the exact sequence
$$ 0 \ra p_n F \ra F \ra \Delta^{n+1} F \otimes \bar I^{\otimes n+1}$$
shows that $r(p_n F)$ is the kernel of the map
$$ M \ra r(\Delta^{n+1} F \otimes \bar I^{\otimes n+1}).$$
But since $r$ commutes with tensor products and $r \circ \Delta^{n+1} = \bar T^{n+1} \circ r$, this map rewrites as
$$ M \ra \bar T^{n+1} M \otimes \bar H^{\otimes n+1},$$
which has kernel $k_n M$.
\end{proof}

\noindent{\bf (g)} \  $k_n$ preserves $\Nil$--reduced modules.
\begin{proof}
This follows from (f), noting that $\Nil$--reduced modules are submodules of $\Nil$--closed modules, and $k_n$ preserves monomorphisms.
\end{proof}

\noindent{\bf (h)} \  If $F = l(M)$, then $p_nF = l(k_nM)$.  It follows that $k_n$ preserves $\Nil$--isomorphisms.
\begin{proof}  Applying the exact functor $l$ to the exact sequence
$$ 0 \ra k_n M \ra M \ra \bar T^{n+1} M \otimes \bar H^{\otimes n+1}$$
shows that $l(k_n M)$ is the kernel of the map
$$ F \ra l(\bar T^{n+1} M \otimes \bar H^{\otimes n+1}).$$
But since $l$ commutes with tensor products and $\Delta^{n+1} \circ l= l \circ \bar T^{n+1}$, this map rewrites as
$$ F \ra \Delta^{n+1} F \otimes \bar I^{\otimes n+1},$$
which has kernel $p_n F$.

A map $f: M \ra N$ in $\U$ is a $\Nil$--isomorphism precisely when $l(f)$ is an isomorphism.  When this happens, $l(k_nf)=p_nl(f)$ will be an isomorphism, so $k_nf$ will be a $\Nil$--isomorphism.
\end{proof}

\noindent{\bf (i)} \  The natural map $\bar R_sk_nM \ra k_n\bar R_sM$
is an isomorphism.
\begin{proof}
Applying the left exact functor $\bar R_s$ to the exact sequence
$$ 0 \ra k_nM \ra  M \ra \bar T^{n+1} M \otimes \bar H^{\otimes n+1}$$
shows that $\bar R_sk_nM$ is the kernel of the map
$$ \bar R_sM \ra \bar R_s(\bar T^{n+1} M \otimes \bar H^{\otimes n+1}).$$
But since $\bar H$ is $\Nil$--closed, and $\bar R_s$ commutes with $\bar T$, this map rewrites as
$$ R_sM \ra \bar T^{n+1} \bar R_s M \otimes \bar H^{\otimes n+1},$$
which has kernel $k_n\bar R_sM$.
\end{proof}

\noindent{\bf (j)} \ $\displaystyle \sum_{l+m=n} k_lM \otimes k_mM = k_n(M \otimes N)$.
\begin{proof}  As with \thmref{poly filt thm}, this says that two natural filtrations of $M \otimes N$ agree, with one filtration certainly including in the other.  Combined with (f), \thmref{poly filt thm} says that this is true if both modules are $\Nil$--closed.

Then (c) implies that the statement is also true if both $M$ and $N$ are sums of $\Nil$--closed modules tensored with locally finite modules.  This includes all $\U$--injectives.

The statement then holds for all modules, using the same argument as in the proof of \thmref{poly filt thm}.
\end{proof}

From the above, one can deduce that the natural map $R_sk_nM \ra k_nR_sM$ is always an inclusion with cokernel in $\Nil$, but the next example shows that this need {\em not} be an isomorphism.

\begin{ex}  \label{Rs ex} With $p=2$, let $M \in \U$ be defined by the pullback square
\begin{equation*}
\xymatrix{
 M \ar[d] \ar[r] & \Phi^2(F(1)) \ar@{->>}[d]  \\
\Sigma F(3) \ar@{->>}[r] & \Sigma^4 \Z/2. }
\end{equation*}
We claim that, for this $M$, the natural monomorphism $R_0(k_1(M)) \ra k_1(R_0(M))$ identifies with the proper inclusion
$ \Phi^3(F(1)) \hra \Phi^2(F(1))$, and is thus {\em not} an isomorphism.

To see this, recall that $R_0M = M/nil_1 M$.  Applying $R_0$ to the short exact sequence
$$0 \ra \Sigma(F(3)^{\geq 4}) \ra M \ra \Phi^2F(1) \ra 0$$
shows that $R_0M$, and thus $k_1R_0M$, equals $\Phi^2F(1)$.

Meanwhile, applying the left exact functor $k_1$ to the short exact sequence
$$ 0 \ra \Phi^3F(1) \ra M \ra \Sigma F(3) \ra 0$$
shows that $k_1M$, and thus $R_0k_1M$, equals $\Phi^3F(1)$.
\end{ex}

\begin{rem} \label{no adjoint remark} Being a right adjoint, $k_n$ is left exact.  It is easily seen to not usually preserve surjections.  To see this, let $M^{>r} \subset M$ denote the submodule of all elements of degree greater than $r$.  Then $F(1) \ra F(1)/F(1)^{>1} = \Sigma \Z/p$ is a surjection, $k_0(F(1)) = 0$, but $k_0(F(1)/F(1)^{>1}) = \Sigma \Z/p$.

A similar argument shows that $\U_n \hra \U$ does not admit a left adjoint. So see this, note that, for all $M \in \U$ and all $r$, $M/M^{>r}$ is in $\U_0$.  Now suppose a left adjoint $q_n: \U \ra \U_n$ exists for some $n$.  It would follow that $M \ra M/M^{>r}$ would factor through the natural map $M \ra q_n(M)$ for all $r$, and we could deduce that $M \ra q_n(M)$ would be monic.  Since this would be true for all $M \in \U$, it would follow that $\U_n = \U$.
\end{rem}

We end this section with a discussion of  \propref{pnH prop}: $k_n \bar H$ is the span of products of elements in $F(1)$ of length at most $n$.

This is essentially the content of \cite[Lem.7.6.6]{hls1}, and a proof goes along the following lines.  $I = I_{\Z/p}$ is a Hopf algebra object in $\F$, with addition $\Z/p \oplus \Z/p \ra \Z/p$ inducing the coproduct
$$ \Psi: I_{\Z/p} \ra I_{\Z/p \oplus \Z/p} \simeq I_{\Z/p} \otimes I_{\Z/p}.$$
Then one observes that $p_n\bar I$ is precisely the kernel of the iterated reduced coproduct
$$ \Psi^n: \bar I \ra \bar I^{\otimes n+1}.$$

Applying $r$ to this, it follows that $k_n\bar H$ is the kernel of the iterated reduced coproduct
$$ \Psi^n: \bar H \ra \bar H^{\otimes n+1},$$
i.e., is the $n$th stage of the primitive filtration of $H^*(B\Z/p)$.  This is then checked to be the span of products of elements in $F(1)$ of length at most $n$.

\section{The identification of $\U_n/\U_{n-1}$} \label{quotient cat section}

\thmref{Un/Un-1 thm} is a consequence of the next two lemmas.

\begin{lem} \label{adjoint lem} The functor $\Sigma_n\text{--}\U_0 \ra \U_n$ given by
$$N \mapsto (N \otimes F(1)^{\otimes n})^{\Sigma_n}$$
is right adjoint to $\bT^n: \U_n \ra \Sigma_n\text{--}\U_0$.
\end{lem}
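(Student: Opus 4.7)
The plan is to start from the ambient $\bar T \dashv (-)\otimes \bar H$ adjunction, where $\bar H := \tilde H^*(B\Z/p)$, iterate it, pass to $\Sigma_n$-invariants to obtain the equivariant adjunction, and finally use the hypothesis $M \in \U_n$ to shrink $\bar H^{\otimes n}$ down to $F(1)^{\otimes n}$.

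Concretely, the iterated adjunction gives a natural isomorphism $\Hom_\U(\bar T^n M, P) \cong \Hom_\U(M, P \otimes \bar H^{\otimes n})$ for $M, P \in \U$, which is $\Sigma_n$-equivariant with $\Sigma_n$ permuting the iterates on the left and the tensor factors on the right. Passing to $\Sigma_n$-invariants on both sides, and noting that $\bar T^n M \in \U_0$ when $M \in \U_n$, one obtains
$$\Hom_{\Sigma_n\text{-}\U_0}(\bar T^n M, N) \cong \Hom_\U(M, (N \otimes \bar H^{\otimes n})^{\Sigma_n})$$
for every $N \in \Sigma_n\text{-}\U_0$.

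Since $M \in \U_n$, any map out of $M$ factors uniquely through the largest $\U_n$-subobject of its target, so the right-hand side equals $\Hom_\U(M, k_n((N \otimes \bar H^{\otimes n})^{\Sigma_n}))$. It therefore suffices to show that $k_n((N \otimes \bar H^{\otimes n})^{\Sigma_n}) = (N \otimes F(1)^{\otimes n})^{\Sigma_n}$. Being a right adjoint, $k_n$ commutes with the limit $(-)^{\Sigma_n}$; and, as established in \secref{krull mod section}, $k_n(N \otimes L) = N \otimes k_n L$ whenever $N \in \U_0$. Together these reduce the claim to the nonequivariant identity $k_n(\bar H^{\otimes n}) = F(1)^{\otimes n}$.

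To verify this last identity, iterate the filtered distributive formula $k_n(M \otimes L) = \sum_{l+m=n} k_l M \otimes k_m L$ (the $k$-analogue of \thmref{kn thm}(d), proved in \secref{krull mod section}) to obtain
$$k_n(\bar H^{\otimes n}) = \sum_{l_1+\cdots+l_n = n} k_{l_1}\bar H \otimes \cdots \otimes k_{l_n}\bar H$$
inside $\bar H^{\otimes n}$. By \propref{pnH prop} with $n=0$, $k_0\bar H = 0$, so every term with some $l_i = 0$ vanishes; the only surviving term is $l_1 = \cdots = l_n = 1$, which by the $n=1$ case of \propref{pnH prop} equals $k_1\bar H \otimes \cdots \otimes k_1\bar H = F(1)^{\otimes n}$. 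The main delicate point is verifying that the base adjunction is genuinely $\Sigma_n$-equivariant, so that one may pass to invariants on both sides; once that is set up, the Krull-filtration reduction and the combinatorial vanishing are formal.
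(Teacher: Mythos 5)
Your proof is correct and takes essentially the same route as the paper. The paper's proof is the same chain of isomorphisms run in a slightly different order: it stays in the equivariant $\Hom$-set throughout, applying $k_n$ inside $\Hom_{\U}^{\Sigma_n}(M,-)$ and only passing to $\Sigma_n$-invariants in the very last step, whereas you pass to invariants first and then commute $k_n$ past $(-)^{\Sigma_n}$; these are equivalent. One thing you do that the paper leaves implicit is spell out the key identity $k_n(\bar H^{\otimes n}) = F(1)^{\otimes n}$, deriving it from the tensor distributivity of $k_n$ (property (j) of \secref{krull mod section}) and the vanishing $k_0\bar H = 0$ from \propref{pnH prop}; this is exactly the right justification and a worthwhile addition.
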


\begin{lem} \label{counit lem}  The counit of the adjunction
$$ \bT^n((N \otimes F(1)^{\otimes n})^{\Sigma_n}) \ra N$$
is an isomorphism for all $N \in \Sigma_n\text{--}\U_0$.
\end{lem}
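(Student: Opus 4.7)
The plan is to compute $\bar T^n((N \otimes F(1)^{\otimes n})^{\Sigma_n})$ directly and recognize it as $N$. The result hinges on three ingredients: \textit{(i)} $\bar T^n$ commutes with $\Sigma_n$-invariants; \textit{(ii)} $\bar T N = 0$ for $N \in \U_0$; and \textit{(iii)} an identification $\bar T^n(F(1)^{\otimes n}) \simeq \Z/p[\Sigma_n]$ as $\Sigma_n$-representation in $\U_0$.

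For (i), since $M^{\Sigma_n}$ is the kernel of the map $M \to \bigoplus_{g \in \Sigma_n} M$ given by $m \mapsto (gm - m)_g$, and $\bar T$ is exact and commutes with finite direct sums, it preserves such kernels. For (ii), combined with the tensor product formula $\bar T(X \otimes Y) = \bar T X \otimes Y \oplus X \otimes \bar T Y \oplus \bar T X \otimes \bar T Y$, iteration yields $\bar T^n(N \otimes F(1)^{\otimes n}) = N \otimes \bar T^n(F(1)^{\otimes n})$, with diagonal $\Sigma_n$-action. For (iii), the underlying unstable module is computed by inclusion-exclusion: writing $\bar T^n = (T - \Id)^n = \sum_k(-1)^{n-k}\binom{n}{k}T^k$ in the Grothendieck group and using $T^k(F(1)^{\otimes n}) = (T^k F(1))^{\otimes n} = (F(1) \oplus (\Z/p)^k)^{\otimes n}$, only the pure $\Z/p$-summand survives, with total coefficient $\sum_k(-1)^{n-k}\binom{n}{k}k^n = n!$ by the Stirling identity. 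For the $\Sigma_n$-equivariance, observe that since $\bar T^2 F(1) = 0$, each of the $n$ iterated applications of $\bar T$ must reduce exactly one distinct factor of $F(1)^{\otimes n}$ to $\Z/p$; the $n!$ resulting summands are naturally indexed by bijections from $\{\text{steps}\}$ to $\{\text{factors}\}$, i.e., by $\Sigma_n$, and under this labeling the permutation action on $F(1)^{\otimes n}$ and the intrinsic $\Sigma_n$-action from $\bar T^n$ (as left adjoint to $-\otimes \bar H^{\otimes n}$) realize left and right regular actions respectively.

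Combining the three ingredients yields $\bar T^n((N \otimes F(1)^{\otimes n})^{\Sigma_n}) \simeq (N \otimes \Z/p[\Sigma_n])^{\Sigma_n} \simeq N$ via the standard untwisting $(V \otimes k[G])^G \cong V$ of a diagonal action. To confirm that the resulting natural isomorphism is indeed the counit, one checks that its adjunct under \lemref{adjoint lem} is the identity on $(N \otimes F(1)^{\otimes n})^{\Sigma_n}$; alternatively, the base case $n=1$, where the counit is manifestly the iso $\bar T(N \otimes F(1)) = N \otimes \bar T F(1) = N \otimes \Z/p = N$, extends by naturality. The main obstacle is the $\Sigma_n$-equivariant part of (iii): the underlying-module count falls out of the Grothendieck-group manipulation, but tracking how the two $\Sigma_n$-actions on $\bar T^n(F(1)^{\otimes n})$ combine into the regular representation requires careful bookkeeping of the iteration.
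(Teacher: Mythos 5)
Your proof follows the same three-step chain of isomorphisms as the paper's: commute $\bar T^n$ past $\Sigma_n$-invariants, pull out the $\U_0$ factor $N$ using $\bar T N = 0$, identify $\bar T^n(F(1)^{\otimes n}) \simeq \Z/p[\Sigma_n]$, and untwist. You do supply a detailed argument (the Grothendieck-group count plus the combinatorial tracking of which factor each $\bar T$ "touches") for the key identification $\bar T^n(F(1)^{\otimes n}) \simeq \Z/p[\Sigma_n]$ with its two commuting $\Sigma_n$-actions, which the paper dismisses as "easy to check," but the overall route is the same.
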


\begin{proof}[Proof of \lemref{adjoint lem}]  Given $M \in \U_n$ and $N \in \Sigma_n\text{--}\U_0$, we compute:
\begin{equation*}
\begin{split}
\Hom_{\Sigma_n\text{--}\U_0}(\bT^n M, N)
 &  = \Hom_{\U}^{\Sigma_n}(\bT^n M, N) \\
 &  = \Hom_{\U}^{\Sigma_n}(M, N \otimes \bar H^{\otimes n}) \\
 &  = \Hom_{\U}^{\Sigma_n}(M, k_n(N \otimes \bar H^{\otimes n})) \text{\  (since } M \in \U_n) \\
 &  = \Hom_{\U}^{\Sigma_n}(M, N \otimes F(1)^{\otimes n})  \\
 &  = \Hom_{\U_n}(M, (N \otimes F(1)^{\otimes n})^{\Sigma_n}).  \\
\end{split}
\end{equation*}
\end{proof}

\begin{proof}[Proof of \lemref{counit lem}]
Starting from the calculation that $\bT F(1) \simeq \Z/p$, it is easy to check that $\bT^n (F(1)^{\otimes n}) = \Z/p[\Sigma_n]$.

Then one has isomorphisms
\begin{equation*}
\begin{split}
\bT^n((N \otimes F(1)^{\otimes n})^{\Sigma_n})
 & \simeq (\bT^n(N \otimes F(1)^{\otimes n}))^{\Sigma_n} \\
 & \simeq (N \otimes \bT^n(F(1)^{\otimes n}))^{\Sigma_n} \\
 & \simeq (N \otimes \Z/p[\Sigma_n])^{\Sigma_n} \\
 & \simeq N.
\end{split}
\end{equation*}
\end{proof}

\begin{proof}[Proof of \thmref{recursive Un thm}]
Given $M \in \U$, suppose that there exist $K,Q \in \U_{n-1}$, $N \in \Sigma_n\text{--}\U_0$, and an exact sequence
$$ 0 \ra K \ra M \ra (N \otimes F(1)^{\otimes n})^{\Sigma_n} \ra Q \ra 0.$$
Applying the exact functor $\bT^n$ to this sequence shows that $\bT^n M \simeq N$, and then applying $\bT$ once more shows that $\bT^{n+1}M = 0$, so that $M \in \U_n$.

Conversely, given $M \in \U_n$, \thmref{Un/Un-1 thm} tells us that the unit of the adjunction
$$ M \ra (\bT^n M \otimes F(1)^{\otimes})^{\Sigma_n}$$
has kernel and cokernel in $\U_{n-1}$.
\end{proof}

\begin{ex}  \label{F(n) example} $\U_n$ is generally not generated as a localizing category by the modules $\Sigma^s F(m)$ with $m \leq n$.  If it were, then $\F^n$ would be generated as a localizing category by the functors $H_m$, for $m\leq n$.   But this is not always true.

The first example of this is $\F^3$, when $p=2$.  There is a splitting of functors
$$ \Lambda^2(V) \otimes V \simeq \Lambda^3(V) \oplus L(V).$$
$L$ is a simple object in $\F^3$ which is not among the composition factors of the functors $H_m$, $m\leq 3$: these are just the functors $\Lambda^m$, $m \leq 3$.
\end{ex}

\section{A functor to symmetric sequences in $\U_0$} \label{sym sequences section}

Recall that $\Sigma_*\text{--}\U_0$ is the category of symmetric sequences of locally finite modules, and that the functor
$$\sigma_*: \U \lra \Sigma_*\text{--}\U_0$$
is defined by $\sigma_nM = \bT^n k_nM$.

We prove the various properties of this functor asserted in \thmref{sigma thm}.

Properties listed in parts (a)--(c) of the theorem are true because the analogous properties have been shown to be true for the functors $\bT$ and $k_n$. \\

\noindent{\bf (d)} \ $\sigma_*$ is symmetric monoidal: $ \sigma_*(M \otimes N) = \sigma_*M \boxtimes \sigma_*N$.

\begin{proof}  Recall that $\sigma_nM = \bT^n \bar k_nM$, where $\bar k_nM = k_nM/k_{n-1}M$.  Then we compute:
\begin{equation*}
\begin{split}
\sigma_n(M\otimes N)
 & = \bT^n\bar k_n(M\otimes N) \\
 & =\bigoplus_{l+m=n}\bT^n(\bar k_lM \otimes \bar k_mN) \\
 & =\bigoplus_{l+m=n} \Ind_{\Sigma_l \times \Sigma_m}^{\Sigma_n}(\bT^l\bar k_lM \otimes \bT^m\bar k_mN) \\
 & = (\sigma_*M \boxtimes \sigma_*N)_n.
\end{split}
\end{equation*}
\end{proof}

\noindent{\bf (e)} \ For all $s$ and $n$, there is a natural isomorphism of $\Z/p[\Sigma_n]$--modules
$$(\sigma_nM)^s \simeq \Hom_{\U}(F(1)^{\otimes n}, \bar R_sM).$$
\begin{proof}

We first look at the special case when $M$ is $\Nil$--closed.  We claim that then $\sigma_n M$ is concentrated in degree 0, and
$$(\sigma_nM)^0 \simeq \Hom_{\U}(F(1)^{\otimes n}, M).$$

To see this, note that $M = r(F)$ for some $F \in \F$.  Thus
$$ \sigma_n M = \bT^n k_n r(F)= r(\Delta^n p_n F).$$
Since $p_nF$ is polynomial of degree $n$, $\Delta^n p_nF$ will be polynomial of degree 0, and is thus constant.  It follows that
\begin{multline*}
r(\Delta^n p_n F)
= \Hom_{\F}(\Z/p, \Delta^n p_n F)  = \Hom_{\F}(\bar P^{\otimes n}, p_n F) \\
  = \Hom_{\F}(q_n(\bar P^{\otimes n}), F) = \Hom_{\F}(\Id^{\otimes n}, F)  = \Hom_{\U}(F(1)^{\otimes n}, M).
\end{multline*}

Now we consider the case of a general unstable module $M$.  For locally finite modules, the skeletal filtration equals the nilpotent filtration, so
$$ (\sigma_nM)^s = \bar R_s \sigma_n M = \sigma_n \bar R_s M = \Hom_{\U}(F(1)^{\otimes n}, \bar R_s M).$$
\end{proof}

We make a couple of general comments regarding the values of $\sigma_*M$.

Firstly, from part (e) of \thmref{sigma thm}, one sees that if the nilpotence length of $M$ is finite, there will be uniform bound on the nonzero degrees of the modules $\sigma_nM$.  More precisely, if $nil_{s+1}M = 0$, then $(\sigma_nM)^t = 0$ for all $t>s$ and all $n$. This is the case if $M$ is a module over a Noetherian unstable algebra $K$ which is finitely generated using both the $K$--module and $\A$--module structures \cite[Lemma 6.3.10]{meyer}. (The more accessible paper \cite{henn} has a weaker version, and both \cite{hls2} and \cite{k5} give examples of bounds on nilpotence.)

Our second comment is that if $K$ is an unstable algebra, then $\sigma_*K$ will be an  algebra in symmetric sequences.  Explicitly, the multiplication on $K$ induces $\Sigma_i \times \Sigma_j$ equivariant maps $\sigma_iK \otimes \sigma_jK \ra \sigma_{i+j}K$ which are suitably associative, commutative, and unital.  In particular, each $\sigma_nK$ will be a module over the unstable algebra $\sigma_0K$, the locally finite part of $K$.

We end this section with various examples, all restricted to $p=2$.

\begin{prop} Suppose $M$ is `homogeneous of degree $m$', i.e., $\bar k_m M = M$.  Then $\bT^m M \in \Sigma_m\text{--}\U_0$, and
$$
\sigma_nM =
\begin{cases}
\bT^mM  & \text{if } n=m \\ 0 & \text{otherwise}.
\end{cases}
$$
\end{prop}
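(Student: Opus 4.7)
The plan is to unpack the hypothesis $\bar k_m M = M$ into the two conditions $k_m M = M$ and $k_{m-1} M = 0$, and then read off the three cases $n < m$, $n = m$, $n > m$ directly from the definition $\sigma_n M = \bar T^n k_n M$ together with Schwartz's theorem.

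First I would note that, since $\bar k_m M = k_m M / k_{m-1} M$ and $k_{m-1} M \subseteq k_m M \subseteq M$, the equality $\bar k_m M = M$ forces both $k_m M = M$ (so $M \in \U_m$) and $k_{m-1} M = 0$. By \thmref{schwartz thm} the first of these is equivalent to $\bar T^{m+1} M = 0$, which in turn says $\bar T(\bar T^m M) = 0$ and so $\bar T^m M \in \U_0$; combined with the natural $\Sigma_m$--action on $\bar T^m M$ coming from the adjunction defining $\bar T^m$, this gives $\bar T^m M \in \Sigma_m\text{--}\U_0$.

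For the case analysis, if $n < m$ then $k_n M \subseteq k_{m-1} M = 0$, so $\sigma_n M = \bar T^n k_n M = 0$. If $n = m$, then $k_m M = M$ directly yields $\sigma_m M = \bar T^m M$. If $n > m$, the inclusion $\U_m \subseteq \U_n$ gives $k_n M = M$, and since $n \geq m+1$ we have $\bar T^n M = \bar T^{n-m-1}(\bar T^{m+1} M) = 0$, so $\sigma_n M = 0$.

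There is no real obstacle here: the proposition is essentially a bookkeeping consequence of \thmref{schwartz thm} and the inclusions $k_{n-1} M \subseteq k_n M \subseteq M$. The only mild subtlety is extracting $k_{m-1} M = 0$ from $\bar k_m M = M$, which I would spell out explicitly to avoid any ambiguity about whether a composition factor of $M$ can coincide with $M$ itself.
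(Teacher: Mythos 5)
Your proof is correct, and it follows a genuinely different route from the paper's. The paper gives a one-line argument: it observes that the hypothesis $\bar k_m M = M$ is equivalent to saying that the unit $M \to (\bT^m M \otimes F(1)^{\otimes m})^{\Sigma_m}$ of the adjunction in Lemma~\ref{adjoint lem} is an embedding, and then invokes Theorem~\ref{Un/Un-1 thm} (the equivalence $\U_m/\U_{m-1} \simeq \Sigma_m\text{--}\U_0$). By contrast, you never touch the adjunction or the quotient-category equivalence; you instead unpack $\bar k_m M = M$ into $k_{m-1}M = 0$ and $k_m M = M$, and then read off $\sigma_n M = \bT^n k_n M$ in the three ranges $n<m$, $n=m$, $n>m$ using only Schwartz's characterization (Theorem~\ref{schwartz thm}) and the chain $k_{n}M \subseteq k_{n+1}M \subseteq M$. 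Your approach is more elementary and self-contained, at the cost of the conceptual picture the paper's argument highlights (that $\sigma_m$ implements the quotient functor to $\U_m/\U_{m-1}$); the paper's version buys brevity but leaves the three-case verification implicit. One small point worth keeping in your write-up: as you note, the equality $\bar k_m M = M$ must be read as an equality of the canonical subquotient with $M$, which is what forces both $k_{m-1}M=0$ and $k_m M = M$ simultaneously — spelling that out, as you do, is worthwhile.
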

\begin{proof}  The hypothesis is equivalent to saying that the unit of the adjunction $M \ra (\bT^m M \otimes F(1)^{\otimes m})^{\Sigma_m}$ is an embedding, and the result follows from \thmref{Un/Un-1 thm}.
\end{proof}

\begin{ex}  As special cases of the proposition, one has
$$
\sigma_nF(m) =
\begin{cases}
\Z/2  & \text{if } n=m \\ 0 & \text{otherwise},
\end{cases}
$$
and
$$
\sigma_nF(1)^{\otimes m} =
\begin{cases}
\Z/2[\Sigma_m] & \text{if } n=m \\ 0 & \text{otherwise}.
\end{cases}
$$
\end{ex}

Let $U: \U \ra \K$ be the usual free functor, left adjoint to the forgetful functor.  The next proposition describes $\sigma_*K$ when $K= U(M)$ where $M$ is as in the last example.

To state this, we define a functor
$$Sh^m: \Sigma_m\text{--}\U_0 \ra \text{algebras in } \Sigma_*\text{--}\U_0$$  as follows.  Given a $\Sigma_m$--module $N$, let
$$ Sh^m_n(N) =
\begin{cases}
\Ind_{\Sigma_k \wr \Sigma_m}^{\Sigma_{km}}(N^{\otimes k}) & \text{if } n=km \\ 0 & \text{otherwise}.
\end{cases}
$$
Given $i+j=k$, the multiplication map $ Sh^m_{im}(N) \otimes Sh^m_{jm}(N) \ra Sh^m_{km}(N)$
is defined by the composite
$$\Ind_{\Sigma_i \wr \Sigma_m}^{\Sigma_{im}} N^{\otimes i} \otimes \Ind_{\Sigma_j \wr \Sigma_m}^{\Sigma_{m}} N^{\otimes j} \ra
\Ind_{(\Sigma_i \times \Sigma_j) \wr \Sigma_m}^{\Sigma_{im} \times \Sigma_{jm}} N^{\otimes k} \ra
\Ind_{(\Sigma_k) \wr \Sigma_m}^{\Sigma_{km}} N^{\otimes k},$$
where the first map is $\Ind_{(\Sigma_i \times \Sigma_j) \wr \Sigma_m}^{\Sigma_{im} \times \Sigma_{jm}}$ applied to the shuffle product
$$ N^{\otimes i} \otimes N^{\otimes j} \ra N^{\otimes k}.$$

\begin{prop}  Suppose $M$ satisfies $\bar k_mM = M$.  Then
$ \sigma_*(U(M)) = Sh^m_*(\bT^mM)$.
\end{prop}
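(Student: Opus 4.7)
The strategy hinges on the symmetric monoidal property of $\sigma_*$ (Theorem~\ref{sigma thm}(d)) combined with a word-length filtration of $U(M)$.

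First I would compute $\sigma_*(M^{\otimes k})$ for each $k \geq 0$. Writing $N = \bar T^m M$, the hypothesis $\bar k_m M = M$ together with Theorem~\ref{kn thm}(d) forces $\bar k_{km}(M^{\otimes k}) = M^{\otimes k}$ and $\bar k_n(M^{\otimes k}) = 0$ for $n \neq km$. The symmetric monoidality of $\sigma_*$ then gives
$$\sigma_n(M^{\otimes k}) = \begin{cases}\Ind_{\Sigma_m^{\times k}}^{\Sigma_{km}}(N^{\otimes k}) & \text{if } n = km,\\ 0 & \text{otherwise,}\end{cases}$$
and the permutation $\Sigma_k$-action on $M^{\otimes k}$ equips the nonzero term with its natural $\Sigma_k \wr \Sigma_m$ structure.

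Next I would introduce the word-length filtration $U_{\leq k}(M) \subseteq U(M)$, defined as the image of the multiplication $\bigoplus_{j \leq k} M^{\otimes j} \to U(M)$. The Corollary following Theorem~\ref{schwartz thm} gives $U_{\leq k}(M) \in \U_{km}$, so $U_{\leq k}(M) \subseteq k_{km}(U(M))$, and the commutativity of $U(M) \in \K$ induces a $\Sigma_k$-equivariant surjection $(M^{\otimes k})_{\Sigma_k} \twoheadrightarrow U_{\leq k}(M)/U_{\leq k-1}(M)$.

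The crucial step is to show $k_{km}(U(M)) = U_{\leq k}(M)$ and that the surjection above is an isomorphism modulo $\Nil$. I would transport this to $\F^{an}$ via the exact, strong monoidal functor $l: \U \to \F^{an}$: Theorem~\ref{kn thm}(h) identifies $l \circ k_n = p_n \circ l$, and $l(U(M))$ is identified with the free commutative algebra on $l(M)$ in $\F^{an}$ (the restriction relations $Sq^{|y|}y = y^2$ contribute only to the $\Nil$-part of $U(M)$, hence collapse under $l$). Under this identification $l(U_{\leq k}(M))$ is the polynomial-degree-$\leq km$ part of this free algebra, which coincides with $p_{km}(l(U(M)))$ by iterated application of Theorem~\ref{poly filt thm}(b). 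Since $\sigma_n$ is insensitive to $\Nil$-isomorphisms (Theorem~\ref{sigma thm}(c)), this is all the matching we need.

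Finally, applying the exact functor $\bar T^{km}$ to $\bar k_{km}(U(M)) \cong (M^{\otimes k})_{\Sigma_k}$ and invoking Step~1 yields $\sigma_{km}(U(M)) = \bigl(\sigma_{km}(M^{\otimes k})\bigr)_{\Sigma_k}$. A standard representation-theoretic identification---the $\Sigma_k$-coinvariants of $\Ind_{\Sigma_m^{\times k}}^{\Sigma_{km}}(N^{\otimes k})$ (with the diagonal action coming from permuting cosets and permuting tensor factors) equal $\Ind_{\Sigma_k \wr \Sigma_m}^{\Sigma_{km}}(N^{\otimes k}) = Sh^m_{km}(N)$---concludes the argument. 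The hard part is the Krull/word-length matching: showing that the restriction relations in $U(M)$ do not produce spurious elements of lower Krull degree than their word-length suggests. Passing through the equivalence $\U/\Nil \simeq \F^{an}$, where the analogous polynomial-filtration statement is known, is the cleanest route around this obstacle.
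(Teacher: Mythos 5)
Your overall strategy (filter $U(M)$ by word length, compare the associated graded to tensor powers of $M$, push forward to $\F^{an}$) parallels the paper's, but there is a genuine error in the identification of the graded pieces, and the fix requires replacing $\Nil$ by $\U_{km-1}$ throughout.

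The associated graded of the word-length filtration is \emph{not} $S^k M = (M^{\otimes k})_{\Sigma_k}$ modulo $\Nil$: it is the exterior power $\Lambda^k M$. The restriction relation $Sq^{|y|}y = y^2$ sets $y\cdot y$ equal to an element of strictly smaller word length, so in the associated graded it becomes $y\cdot y = 0$; this passes to the symmetric algebra to give $\Lambda^*M$, not $S^*M$. The surjection $S^k M \twoheadrightarrow \Lambda^k M$ is emphatically \emph{not} a $\Nil$-isomorphism: its kernel for $k=2$ is $\Phi M$, which is reduced and nonzero whenever $M$ is. Likewise your claim that ``the restriction relations $\dots$ contribute only to the $\Nil$-part of $U(M)$, hence collapse under $l$'' is false. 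Take $M = F(1)$: $l(M) = \Id$, the free commutative algebra on $\Id$ is $S^*$, but $l(U(F(1))) = l(H^*(B\Z/p)) = I_{\Z/p} = S^*/(x^p-x)$, a proper quotient. Concretely, the ideal $(Sq^{|y|}y - y^2) \subset S^*(F(1))$ contains $x^2 - x\cdot x$, and $P_0^n$ of this element is never zero, so the ideal is not nilpotent. Because your appeal to Theorem~\ref{sigma thm}(c) (insensitivity to $\Nil$-isomorphisms) rests on a map that is not a $\Nil$-isomorphism, the crucial step as written fails.

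What does save the final answer is that the kernel of $S^k M \twoheadrightarrow \Lambda^k M$ lies in $\U_{km-1}$ (for $k=2$ this is $\Phi M \in \U_m \subset \U_{2m-1}$, using Theorem~\ref{kn thm}(b), and similarly for higher $k$), so $\bar T^{km}S^k M \cong \bar T^{km}\Lambda^k M$ and hence $\sigma_{km}$ cannot see the difference. In other words, your ``isomorphism modulo $\Nil$'' should be ``isomorphism modulo $\U_{km-1}$'', and the justification via $l$ and the polynomial filtration of a free algebra must be replaced by a Krull-degree estimate on the kernel of $S^k \to \Lambda^k$. This is exactly why the paper works directly with $\Lambda^k M$: it establishes $\bar k_{km}U(M) = \Lambda^k M$ from the filtration exact sequences $0 \to U^{k-1}(M) \to U^k(M) \to \Lambda^k M \to 0$, and then computes $\bar T^{km}\Lambda^k M$ directly from $T\Lambda^k M = \Lambda^k TM$, $TM = \bar T M \oplus M$, and the exponential formula for $\Lambda^k$ of a direct sum. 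Your use of the symmetric-monoidal property of $\sigma_*$ together with the $\Sigma_k$-coinvariants identity $\bigl(\Ind_{\Sigma_m^{\times k}}^{\Sigma_{km}}(N^{\otimes k})\bigr)_{\Sigma_k} \cong \Ind_{\Sigma_k \wr \Sigma_m}^{\Sigma_{km}}(N^{\otimes k})$ is a pleasant alternative to the paper's second lemma and is correct; it is only the passage from $U(M)$ to $S^k M$ that needs repair.
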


This follows from the next two lemmas.

\begin{lem} Suppose $M$ satisfies $\bar k_mM = M$.  Then
\begin{equation*}
\bar k_nU(M) =
\begin{cases}
\Lambda^k M & \text{if } n = km \\ 0 & \text{otherwise}.
\end{cases}
\end{equation*}
\end{lem}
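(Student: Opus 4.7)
The approach is to put a length filtration on $U(M)$, identify its associated graded as the exterior algebra $\Lambda^*M$, and then run the Krull filtration through the resulting short exact sequences. The hypothesis $\bar k_m M = M$ means $k_{m-1}M = 0$ and $k_m M = M$, so by the tensor formula (\thmref{kn thm}(d)), $k_j(M^{\otimes k}) = 0$ for $j<km$ and $k_{km}(M^{\otimes k}) = M^{\otimes k}$. In particular, each tensor power $M^{\otimes k}$ is ``homogeneous'' of Krull degree $km$, and the same holds for any of its quotients and subquotients.

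First I would introduce the increasing filtration $F_kU(M) \subset U(M)$ defined as the image of the multiplication map $\bigoplus_{j \leq k} M^{\otimes j} \to U(M)$. The crucial point at $p=2$ is that the unstable restriction relation $x^2 = Sq^{|x|}x$ sends the square of a weight-one element back to a weight-one element. Combined with the Cartan formula, this shows that the Steenrod action preserves $F_\bullet$, and that on the associated graded all diagonals $v\otimes v\otimes\cdots$ are killed. Hence $F_kU(M)/F_{k-1}U(M) \simeq \Lambda^kM$ as unstable $\A$-modules, where $\Lambda^kM$ denotes the $\F_2$-exterior power $(M^{\otimes k})_{\Sigma_k}$ modulo the submodule generated by tensors with a repeated factor.

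Next I would induct on $k$. Using the short exact sequence
$$0 \to F_{k-1}U(M) \to F_kU(M) \to \Lambda^kM \to 0,$$
the fact that $\Lambda^kM$ (being a quotient of $M^{\otimes k}$) lies in $\U_{km}$ and satisfies $k_{km-1}\Lambda^kM = 0$, and the left-exactness of $k_n$, one establishes inductively that $F_kU(M) \in \U_{km}$, that $k_{km-1}F_kU(M) = F_{k-1}U(M)$, and hence that $\bar k_{km}F_kU(M) = \Lambda^kM$. Since $k_n$ commutes with filtered colimits (\thmref{kn thm}(a)) and $U(M) = \bigcup_k F_kU(M)$, the stated formula follows: for $n=km$ one reads off $\bar k_{km}U(M) = \Lambda^kM$, and for $km<n<(k+1)m$ the colimit stabilizes at $F_kU(M)$, forcing the graded piece to vanish.

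The main obstacle is the associated-graded identification. One must rigorously check, at the level of unstable $\A$-modules rather than mere vector spaces, that the length filtration is preserved by the Steenrod operations and that the restriction relation $x^2 = Sq^{|x|}x$ (together with its consequences on iterated products) accounts for exactly the diagonal relations needed to produce $\Lambda^kM$. Once this step is settled, the induction and the colimit passage are routine applications of the left-exactness and colimit-preservation of $k_n$.
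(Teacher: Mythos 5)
Your proposal follows essentially the same route as the paper's sketch proof: filter $U(M)$ by length, identify the associated graded as $\Lambda^k M$, and run the left-exact functor $k_n$ through the resulting short exact sequences, finishing with a colimit argument.

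One step is not justified as written. You assert that since $M^{\otimes k}$ is homogeneous of Krull degree $km$, ``the same holds for any of its quotients and subquotients.'' That fails for quotients: $F(1)$ is homogeneous of degree $1$ (indeed $k_0F(1)=0$), yet its quotient $F(1)/F(1)^{>1}\simeq\Sigma\Z/2$ has $k_0$ equal to the whole module --- this is exactly the example given in \remref{no adjoint remark}. The point is that $k_{d-1}$ is left exact but not right exact, so the condition $k_{d-1}N=0$ is inherited by submodules of $N$ but not by quotients. Since you have defined $\Lambda^k M$ as a quotient of $M^{\otimes k}$, the vanishing $k_{km-1}\Lambda^k M = 0$ does not follow from the reason you give. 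The paper instead uses that $\Lambda^k M$ \emph{embeds} in $M^{\otimes k}$ (at $p=2$ one may take the image of the norm map from $(M^{\otimes k})_{\Sigma_k}$ into $(M^{\otimes k})^{\Sigma_k}\subset M^{\otimes k}$), giving $k_{km-1}\Lambda^k M \subseteq k_{km-1}M^{\otimes k}=0$ by left exactness, while $\Lambda^k M\in\U_{km}$ does follow from being a subquotient since $\U_{km}$ is localizing. With that correction your induction and colimit passage go through exactly as in the paper. The associated-graded identification $F_kU(M)/F_{k-1}U(M)\simeq\Lambda^k M$ of unstable modules, which you flag as the main remaining obstacle, is standard and is likewise taken for granted in the paper's proof.
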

\begin{proof}[Sketch proof]
We first observe that $k_{km-1}\Lambda^k M \subseteq k_{km-1}M^{\otimes k} = 0$, so that $\bar k_{km}\Lambda^k M = \Lambda^k M$.

Now we recall that $U(M) = S^*(M)/(Sq^{|x|}x-x^2)$.  This is filtered with $U^k(M)$ equal to the image of $M^{\otimes k}$ in $U(M)$, and there are exact sequences
$$ 0 \ra U^{k-1}(M) \ra U^{k}(M) \ra \Lambda^{k} M \ra 0.$$

Applying $k_n$ to this yields exact sequences
$$ 0 \ra k_nU^{k-1}(M) \ra k_nU^{k}(M) \ra k_n\Lambda^{k} M,$$
and one deduces that
$$ U^{k-1}(M) = k_{km-1}U^k(M) \text{ so that } \bar k_{km}U^k(M) = \Lambda^k M$$
and
$$ k_{km+r}U^k(M) = k_{km+r}U(M)$$
for $0\leq r < m$.
\end{proof}

\begin{lem}  If $\bT^{m+1}M = 0$, then $\bar T^{km}\Lambda^k M = \Ind_{\Sigma_k \wr \Sigma_m}^{\Sigma_{km}}((\bT^mM)^{\otimes k}))$.
\end{lem}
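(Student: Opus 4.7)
My plan is to relate $\Lambda^kM$ to the tensor power $M^{\otimes k}$ through the intermediates $U^kM$ and $S^kM=(M^{\otimes k})_{\Sigma_k}$, compute $\bT^{km}M^{\otimes k}$ using the symmetric monoidal property of $\sigma_*$, and then take $\Sigma_k$-coinvariants. Throughout I assume $m\geq 1$; the case $m=0$ is degenerate and should be treated separately.

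First, applying the exact functor $\bT^{km}$ to the defining sequence $0\to U^{k-1}M\to U^kM\to\Lambda^kM\to 0$, and noting that $U^{k-1}M$ is a quotient of $M^{\otimes(k-1)}\in\U_{(k-1)m}\subseteq\U_{km-1}$, yields $\bT^{km}U^kM\xrightarrow{\sim}\bT^{km}\Lambda^kM$.

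Second, commutativity of $U(M)$ lets me factor $M^{\otimes k}\twoheadrightarrow U^kM$ through $S^kM$, and I claim the kernel $K_2$ of $S^kM\twoheadrightarrow U^kM$ lies in $\U_{(k-1)m}$. I identify $K_2$ with the degree-$k$ part of the ideal $J=(Sq^{|x|}x-x^2)\subset S^*M$, and use a standard Gr\"obner argument (relying on $S^*M$ being a polynomial algebra, so having no zero divisors) to show that any such homogeneous element in fact sits inside the square ideal $(x^2)_k$, which is the $\A$-linear image of $\Phi M\otimes S^{k-2}M\in\U_{(k-1)m}$ under $\phi(x)\otimes z\mapsto x^2z$. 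Hence $\bT^{km}K_2=0$ and $\bT^{km}S^kM\xrightarrow{\sim}\bT^{km}U^kM$.

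Third, $\bT^{km}$ is a left adjoint, so commutes with $\Sigma_k$-coinvariants, giving $\bT^{km}S^kM=(\bT^{km}M^{\otimes k})_{\Sigma_k}$. To compute $\bT^{km}M^{\otimes k}$, I apply \thmref{sigma thm}(d): $\sigma_*(M^{\otimes k})=(\sigma_*M)^{\boxtimes k}$. Since $\sigma_iM=0$ for $i>m$ (as $\bT^{m+1}M=0$) and $\sigma_mM=\bT^mM$, the only surviving term at level $km$ has $l_1=\cdots=l_k=m$, yielding $\sigma_{km}(M^{\otimes k})=\Ind_{\Sigma_m^{\times k}}^{\Sigma_{km}}((\bT^mM)^{\otimes k})$; and this equals $\bT^{km}M^{\otimes k}$ because $\bT^{km}$ annihilates $k_{km-1}(M^{\otimes k})$.

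Finally, taking $\Sigma_k$-coinvariants converts induction from $\Sigma_m^{\times k}$ into induction from $\Sigma_m^{\times k}\rtimes\Sigma_k=\Sigma_k\wr\Sigma_m$, yielding the claimed formula. The main obstacle is the Gr\"obner-style identification ensuring $K_2\in\U_{(k-1)m}$; the rest of the argument assembles cleanly from exactness, the left-adjointness of $\bT^{km}$, and \thmref{sigma thm}(d).
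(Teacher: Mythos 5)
Your argument takes a genuinely different route from the paper's sketch. The paper simply iterates three facts --- $T\Lambda^k M \simeq \Lambda^k TM$, $TM \simeq \bT M \oplus M$, and $\Lambda^k(A\oplus B) \simeq \bigoplus_{i+j=k}\Lambda^i A \otimes \Lambda^j B$ --- writing $T^{km}M$ as a sum of $\bT^{|S|}M$ over subsets $S\subseteq\{1,\dots,km\}$ (with $\bT^{|S|}M=0$ for $|S|>m$), and then extracting the ``all-$\bT$'' summand of $T^{km}\Lambda^k M=\Lambda^kT^{km}M$, from which the wreath-product induction can be read off directly. Your proposal instead reduces $\Lambda^k M$ to the tensor power $M^{\otimes k}$, computes $\bT^{km}M^{\otimes k}$ from the monoidality of $\sigma_*$ (\thmref{sigma thm}(d)), and passes to $\Sigma_k$-coinvariants, converting $\Ind_{\Sigma_m^{\times k}}^{\Sigma_{km}}$ into $\Ind_{\Sigma_m\wr\Sigma_k}^{\Sigma_{km}}$; this last chain is correct and gives an arguably more conceptual reason why the wreath product appears. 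What the paper's route buys is brevity and the avoidance of coinvariants entirely; what yours buys is a clean appeal to the symmetric monoidal structure already established.

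However, the middle of your argument is both over-engineered and, as stated, has a gap. You route through $U^kM$ and invoke a ``Gr\"obner argument'' to show that the kernel $K_2$ of $S^kM\twoheadrightarrow U^kM$ lies in the square ideal. But the ideal $J=(Sq^{|x|}x-x^2)$ is not homogeneous for the polynomial grading on $S^*M$ (its generators mix polynomial degrees $1$ and $2$), so $J\cap S^kM$ is not visibly contained in the image of $\Phi M\otimes S^{k-2}M$; cancellations among several generators make the Gr\"obner-style claim genuinely nontrivial, and the absence-of-zero-divisors observation alone does not settle it. Fortunately you do not need $U^kM$ at all. Work directly with the natural surjection $S^kM\twoheadrightarrow\Lambda^kM$ (at $p=2$): its kernel is exactly the image of the $\A$-linear map $\Phi M\otimes S^{k-2}M\ra S^kM$, $\phi(x)\otimes z\mapsto x^2z$. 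Since $\Phi$ preserves $\U_m$ (by \thmref{kn thm}(b)) and tensor adds Krull degree, this kernel lies in $\U_m\otimes\U_{(k-2)m}\subseteq\U_{(k-1)m}\subseteq\U_{km-1}$ when $m\geq 1$, so $\bT^{km}$ kills it and $\bT^{km}S^kM\xra{\ \sim\ }\bT^{km}\Lambda^kM$. Your steps three and four then complete the proof exactly as you describe, and your remark that $m=0$ is degenerate is correct.
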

\begin{proof}[Sketch proof]  This follows by iterated use of the following facts: $T\Lambda^kM = \Lambda^kTM$, $TM = \bT M \oplus M$, and $\Lambda^k(M \otimes N) = \bigoplus_{i+k=k} \Lambda^i M \otimes \Lambda^j N$.
\end{proof}

The proposition is illustrated with the next example.

\begin{ex} $\sigma_*(H^*(K(V,m))) = Sh^m_*(V^{\vee})$, where $V^{\vee}$ is the dual of $V$, and is given trivial $\Sigma_m$--module structure.
\end{ex}

We end with a group cohomology example.

\begin{ex} Let $Q_8$ be the quaternionic group of order 8. Then
$$\sigma_*(H^*(BQ_8)) = H^*(S^3/Q_8) \otimes Sh^1_*(\Z/2),$$  with $\sigma_0(H^*(BQ_8)) = H^*(S^3/Q_8)  = \Z/2[x,y]/(x^2+xy+y^2,x^2y+xy^2)$.

This can be seen quite easily.  $Q_8 \subset S^3$ induces an embedding
$$\Phi^2 H^*(B\Z/2) = H^*(BS^3) \subset H^*(BQ_8).$$  The image in cohomology of  $Q_8 \ra Q_8/[Q_8,Q_8] = (\Z/2)^2$ is $\sigma_0H^*(BQ_8)$, and the composite
$\sigma_0H^*(BQ_8) \hra H^*(BQ_8) \ra H^*(S^3/Q_8)$ is an isomorphism.  These maps combine to define an isomorphism in $\K$:
$$ H^*(S^3/Q_8) \otimes \Phi^2 H^*(B\Z/2)  \simeq H^*(BQ_8),$$
and the calculation of $\sigma_*H^*(BQ_8)$ follows.
\end{ex}


\begin{thebibliography}{99}


\bibitem[CCS1]{ccs1} N.~Castellana, J.~A.~Crespo, and J.~Scherer, {\em Deconstructing Hopf spaces}, Invent.~Math. {\bf 167} (2007), 1--18.

\bibitem[CCS2]{ccs2} N.~Castellana, J.~A.~Crespo, and J.~Scherer, {\em Relating Postnikov pieces with the Krull filtration: a spin-off of Serre's theorem},  Forum Math. {\bf 19}  (2007),  555--561.

\bibitem[G]{gabriel} P.~Gabriel, {\em Des cat\'egories ab\'eliennes}, Bull. Soc. Math. France {\bf 90} (1962), 323--448.

\bibitem[GS]{gaudens schwartz} G.~Gaudens and L.~Schwartz, {\em Application dupuis $K(\Z/p,2)$ et une conjecture de N. Kuhn}, Ann. Inst. Fourier, to appear.

\bibitem[H]{henn}    H.-W. Henn, {\em Finiteness properties of injective resolutions of certain unstable modules over the Steenrod algebra and applications}, Math. Ann. {\bf 291} (1991), 191-203.

\bibitem[HLS1]{hls1}  H.-W. Henn, J. Lannes, and L. Schwartz, {\em The categories of unstable modules and  unstable algebras modulo nilpotent objects}, Amer. J. Math. {\bf 115} (1993), 1053-1106.

\bibitem[HLS2]{hls2} H.--W.Henn, J.Lannes, and L.Schwartz, {\em Localizations of unstable $A$-modules and equivariant mod $p$ cohomology},  Math. Ann. {\bf 301}  (1995), 23--68.

\bibitem[K1]{genrep1} N.~J.~Kuhn, {\em Generic representation theory of the
finite general linear groups and the Steenrod algebra: I}, Amer.~J.~Math. {\bf 116}(1994), 327--360.

\bibitem[K2]{k1} N. ~J. ~Kuhn, {\em On topologically realizing modules over the Steenrod algebra}, Ann. Math. {\bf 141}(1995), 321--347.

\bibitem[K3]{genrepsurvey} N.~J.~Kuhn, {\em The generic representation theory of finite fields: a survey of basic structure}, Infinite Length Modules (Proc. Bielefeld, 1998), Trends in Mathematics, Birkhauser (2000), 193--212.

\bibitem[K4]{filt gen rep} N.~J.~Kuhn, {\em A stratification of generic representation theory and generalized Schur algebras}, K-theory J. {\bf 26} (2002), 15--49.

\bibitem[K5]{k5} N. ~J. ~Kuhn, {\em Primitives and central detection numbers in group cohomology},  Advances in Math. {\bf 216} (2007), 387--442.



\bibitem[L]{L} J.Lannes, {\em Sur la cohomologie modulo $p$ des $p$-groupes ab\'eliens \'el\'ementaires},  Homotopy theory (Durham, 1985),   L.M.S. Lect. Note Ser. {\bf 117} (1987), 97--116.

\bibitem[LS1]{ls1} J.~Lannes and L.~Schwartz, {\em A propos de conjectures de Serre et Sullivan}, Invent. Math. {\bf 83} (1986), 593–603.

\bibitem[LS2]{ls2} J.~Lannes and L.~Schwartz, {\em Sur la structure des A-modules instables injectifs}, Topology {\bf 28} (1989), 153–169.

\bibitem[LZ1]{lz1} J.~Lannes and S.~ Zarati, {\em Sur les U-injectifs}, Ann. Sci. École Norm. Sup. (4) {\bf 19} (1986), 303–333.

\bibitem[LZ2]{lz2} J.~Lannes and S.~Zarati, {\em Sur les foncteurs d\'eriv\'es de la d\'estabilisation}, Math. Zeit. {\bf 194} (1987), 25--59.

\bibitem[M]{meyer} D.~M.~ Meyer, {\em Injective objects in categories of unstable K-modules}, Bonner Mathematische Schriften, 316. Universität Bonn, Mathematisches Institut, Bonn, 1999.

\bibitem[P]{pirashvili} T.~Pirashvili, {\em Higher additivisations} [Russian, English summary], Trudy Tbiliss.~Mat.~Inst. Razmodze Akad.~Nauk Gruzin.~SSR {\bf 91}(1988), 44--54.

\bibitem[S1]{s1} L.~Schwartz, {\em La filtration nilpotente de las cat\'egorie $\U$ et la cohomologie des espaces de lacets}, Proceedings Louvain la Neuve 1986, S.L.N.M. {\bf 1318} (1988), 208--218.

\bibitem[S2]{s2} L.~Schwartz, {\em Unstable modules over the Steenrod algebra and Sullivan's fixed point conjecture}, Chicago Lecture Series in Math, University of Chicago Press, 1994.

\bibitem[S3]{s3} L.~Schwartz, {\em A propos de la conjecture de non--r\'{e}alisation due \`{a} N.Kuhn}, Invent.~Math. {\bf 134}(1998), 211--227.

\bibitem[S4]{s4} L.~Schwartz, {\em La filtration de Krull de la catégorie $\U$ et la cohomologie des espaces}, Alg. Geom. Top. {\bf 1}  (2001), 519--548.

\bibitem[W]{weibel} C.~A.~Weibel, {\em An introduction to homological algebra}, Camb.\ Studies Adv.\ Math.\ {\bf 38}, 1994.

\end{thebibliography}
\end{document}